\newcommand{\Partial}[3]{\ensuremath{\frac{\partial^{#1}{#2}}{\partial{#3}^{#1}}}}
\renewcommand{\Partial}[3][ ]{\ensuremath{\partial^{#1}_{#3} {#2}}}
\newcommand{\bfvec}[1]{\ensuremath{{\mathbf #1}}}
\newcommand{\grad}{\ensuremath{ \nabla }}
\newcommand{\Div}[1]{\ensuremath{ \nabla \cdot \left( #1 \right)}}
\newcommand{\clos}[1]{\ensuremath{\overline{#1}}}
\newcommand{\dx}{\ensuremath{{\rm dx}}}
\newcommand{\dS}{\ensuremath{{\rm dS}}}
\newcommand{\Gammat}{\ensuremath{{\Gamma\left(t\right)}}}
\newcommand{\R}[1]{\ensuremath{\mathbbm{R}^{#1}}}
\newcommand{\real}[2]{\ensuremath{{#1}\cdot 10^{#2}}}
\newcommand{\Uh}{\ensuremath{U^h}}
\newcommand{\uh}{\ensuremath{u^h}}
\newcommand{\Vh}{\ensuremath{V^h}}
\newcommand{\gradh}{\ensuremath{\nabla^h}}
\newtheorem{theorem}{Theorem}[section]
\newtheorem{definition}[theorem]{Definition}
\newtheorem{lemma}[theorem]{Lemma}
\newproof{proof}{Proof}
\journal{???}
\begin{document}

\begin{frontmatter}



\title{Numerical solution for the anisotropic Willmore flow of graphs}
\ead{tomas.oberhuber@fjfi.cvut.cz}
\ead[url]{http://geraldine.fjfi.cvut.cz/~oberhuber}


\author{Tom\'{a}\v{s} Oberhuber}
\address{Department of Mathematics, Faculty of Nuclear Sciences and Physical Engineering, Czech Technical University in Prague, Trojanova 13, Praha 2, 120 00, Czech Republic}

\begin{abstract}
The Willmore flow is well known problem from the differential geometry. It minimizes the Willmore functional defined as integral of the mean-curvature square over given manifold. For the graph formulation, we derive modification of the Willmore flow with anisotropic mean curvature. We define the weak solution and we prove an energy equality. We approximate the solution numerically by the complementary finite volume method. To show the stability, we re-formulate the resulting scheme in terms of the finite difference method. By using simple framework of FDM we show discrete version of the energy equality. The time discretization is done by the method of lines and the resulting system of ODEs is solved by the Runge-Kutta-Merson solver with adaptive integration step. We also show experimental order of convergence as well as results of the numerical experiments, both for several different anisotropies.
\end{abstract}

\begin{keyword}
Anisotropy, Willmore flow, curvature minimization, gradient flow, Laplace-Beltrami operator, method of lines, complementary finite volume method, finite difference method

\MSC 35K35 \sep 35K55 \sep 53C44 \sep 65M12 \sep 65M20 \sep 74S20

\end{keyword}

\end{frontmatter}

\section{Introduction} This article extends the isotropic Willmore flow defined by T. J. Willmore in \cite{Willmore-2002}. It is a minimizer of the Willmore functional defined as
$$
\mathcal{W}\left(\Gamma\right) = \int_\Gamma H^2 dS,
$$
where $\Gamma$ is given manifold smooth enough so that the mean curvature $H$ can be evaluated almost everywhere on $\Gamma$. Prescribing the following normal velocity
$$
V = \triangle_{\Gamma} H + H^3 - 2HK\ {\rm on}\ \Gamma\left( t \right),
$$
we generate class of manifolds $\Gammat$ so that $\Gammat$ minimizes the Willmore functional as $t$ goes to infinity. One recognizes several formulation of the Willmore depending of the form in which $\Gammat$ is expressed - {\it the graph formulation} \cite{Dec_Dzi_EEFTWFOG,Oberhuber-2006,XuShu-2009}, {\it the level-set formulation} \cite{Dro_Rum_ALSFFWF,Benes-Mikula-Oberhuber-Sevcovic-2007-1} or {\it the parametric formulation} \cite{Dzi_Kuw_Sch_EOECIRNEAC,Benes-Mikula-Oberhuber-Sevcovic-2007-1}. Also approximation by {\it the phase-field model} exists \cite{Du-Liu-Ryham-Wang-2005}. In this article we will study only the graph formulation. Applications of the Willmore flow can be found in biology \cite{Canham-1970} or in image processing in image inpainting \cite{BertalmioCasellesHaroSapiro-2006}. Especially in the later one, the anisotropic model can be useful.

\section{Problem formulation}
We assume having a manifold $\Gamma_0$ described as a graph of function $u_0$ of two variables:
\begin{equation}
\label{def:graph-formulation}
\Gamma_0 = \left\{ \left[ {\bf x}, u_0\left( {\bf x} \right) \right] \mid {\bf x} \in \Omega \subset \mathbbm{R}^2 \right\},
\end{equation}
where $\Omega \equiv \left( 0, L_1 \right) \times \left( 0, L_2 \right)$ is an open rectangle and we will denote $\partial \Omega$ its boundary.
%
%
We assume having convex function $\gamma: \mathbbm{R}^{n+1}\setminus \{0\} \rightarrow \mathbbm{R}_0^+$, $\gamma = \gamma(p_1, \cdots p_n, -1)$ which is positive 1-homogeneous i.e. $\gamma\left(\lambda \bfvec p\right) = \left| \lambda \right| \gamma\left(\bfvec p\right)$. We will call $\gamma$ surface energy density and denote
$$\nabla_{p} \gamma = \left( \Partial{\gamma}{p_1}, \cdots, \Partial{\gamma}{p_n} \right),$$
where $\Partial{\gamma}{p_i}$ stands for partial derivatives of $\gamma$ w.r.t. variable $p_i$. We define anisotropic mean curvature of a manifold $\Gamma_0$ smooth enough induced by energy density function $\gamma$ as
\begin{equation}
\label{aniso-mean-curvature}
H_\gamma\left(u_0\right) = \nabla \cdot  \left( \nabla_{p} \gamma \left( \grad u_0, -1 \right) \right).
\end{equation}
and the anisotropic Willmore functional as 
\begin{equation}
\label{willmore-functional-graphs}
\mathcal{W}_\gamma\left(u_0\right) = \frac1{2} \int_\Omega H_\gamma^2\left(u_0\right) Q\left(u_0\right) \dx,
\end{equation}
for $Q\left(u\right)=\sqrt{1+\left|\grad u \right|^2}$. We aim to find a function $u^\ast$ minimizing (\ref{willmore-functional-graphs}). The Euler-Lagrange equation for this functional takes the following form
\begin{equation}
\label{willmore-euler-lagrange-aniso}
\Div{ \mathbbm{E}_\gamma\left(u^\ast\right) \grad w_\gamma\left(u^\ast\right) - \frac{1}{2}\frac{w^2_\gamma\left(u^\ast\right)}{Q^3\left(u^\ast\right)} \grad u^\ast } = 0,
\end{equation}
where we denoted
\begin{eqnarray*}
w_\gamma\left(u\right) &:=& Q\left(u\right) H_\gamma\left(u\right), \\
\mathbbm{E}_\gamma\left(u\right)  &:=& \Partial{}{p_i} \Partial{}{p_j} \gamma\left(\grad u,-1\right) =
\left( \grad_{\bfvec p} \otimes \grad_{\bfvec p} \right) \gamma\left(\grad u, -1\right).
\end{eqnarray*}
In the rest of the text we will not emphasize explicitly the dependence of $Q, H_\gamma, w_\gamma$ and $\mathbbm{E}_\gamma$ on $u$. Multiplying (\ref{willmore-euler-lagrange-aniso}) by the test function $\varphi \in C^\infty\left(\Omega\right)$ and integrating over $\Omega$ we get
\begin{eqnarray*}
&& 0 = \int_\Omega \Div{\mathbbm{E}_\gamma \grad w_\gamma - \frac{1}{2}\frac{w_\gamma^2}{Q^3} \grad u} \varphi \dx = \\
&& -\int_{\partial \Omega} \mathbbm{E}_\gamma \grad w_\gamma \cdot \nu \varphi  + \frac{1}{2} \frac{w_\gamma^2}{Q^3} \grad u \cdot \nu \varphi \dS +
\int_\Omega \Div{\mathbbm{E}_\gamma \grad w_\gamma} \varphi - \frac{1}{2} \Div{\frac{w_\gamma^2}{Q^3} \grad u} \varphi \dx.
\end{eqnarray*}
The boundary integral over $\partial \Omega$ vanishes if we set
\begin{eqnarray}
\label{def:neumann-bc-1}
\partial_\nu u &=& 0\ {\rm on}\ \partial \Omega, \\
\label{def:neumann-bc-2}
\mathbbm{E}_\gamma \grad w_\gamma \cdot \nu &=& 0\ {\rm on}\ \partial \Omega.
\end{eqnarray}
These relations define the Neumann boundary conditions. The Dirichlet boundary conditions 
\begin{eqnarray}
\label{def:dirichlet-bc-1}
u &=& g_1\ {\rm on}\ \partial \Omega, \\
\label{def:dirichlet-bc-2}
w_\gamma &=& g_2\ {\rm on}\ \partial \Omega,
\end{eqnarray}
may be obtained in the same way but taking $\varphi \in C_0^\infty \left(\Omega\right)$. In practice, the $L_2$-gradient flow of (\ref{willmore-functional-graphs}) is solve rather than (\ref{willmore-euler-lagrange-aniso}) -- see \cite{Dec_Dzi_EEFTWFOG,Dro_Rum_ALSFFWF}. The following definition involves parabolic partial differential equation with unknown function $u$ which is now also dependent on artificial time parameter $t$ i.e. $u=u\left(t,\bfvec x\right)$. In the same sense as in (\ref{def:graph-formulation}), a moving manifold $\Gammat$ is obtained.
\begin{definition}
\label{def:aniso-willmore-fl-graphs}
Let $\Omega$ be a domain in $\R{2}$. The {\bf anisotropic Willmore flow of graphs with the Dirichlet boundary conditions} and the initial condition $u_0$ is a fourth order parabolic problem given by
\begin{eqnarray}
\label{anisotropic-willmore-graphs-ut}
\partial_t u &=&-Q\Div{ \mathbbm{E}_\gamma \grad w_\gamma - \frac{1}{2}\frac{w^2_\gamma}{Q^3} \grad u }\quad {\rm on}\ \left(0,T\right) \times \Omega,\\
\label{anisotropic-willmore-graphs-w}
w_\gamma &=& QH_\gamma \quad {\rm on}\ \left(0,T\right) \times \Omega,\\
\label{anisotropic-willmore-graphs-ini}
u \mid_{t=0} &=& u_0 \quad {\rm on}\ \Omega, \\
\label{aniso-willmore-graphs-dirichlet-bc}
u &=& g_1,\ w_\gamma = g_2  \quad {\rm on}\ \partial \Omega.
\end{eqnarray}
The {\bf anisotropic Willmore flow of graphs with the Neumann boundary conditions} and the initial condition $u_0$ is a fourth order parabolic problem given by (\ref{anisotropic-willmore-graphs-ut})--(\ref{anisotropic-willmore-graphs-ini}) and 
\begin{equation}
\label{aniso-willmore-graphs-neumann-bc}
\Partial{u}{\nu} = 0,\ \mathbbm{E}_\gamma \grad w_\gamma \cdot \nu = 0 \quad {\rm on}\ \partial \Omega.
\end{equation}
\end{definition}
We also define the weak formulation:
\begin{definition}
Let $\Omega$ be a domain in $\R{2}$. The {\bf weak solution of anisotropic Willmore flow of graphs with the Dirichlet boundary conditions}
\begin{eqnarray*}
u &=& g_1 \quad {\rm on} \ \partial \Omega,\\
w_\gamma &=& g_2 \quad {\rm on} \ \partial \Omega,
\end{eqnarray*}
is a couple $u,w_\gamma : \left( 0, T \right) \rightarrow  H^1_0\left(\Omega\right)$ which for each test function $\varphi, \xi \in H^1_0\left(\Omega \right)$ and a.e in $\left( 0, T \right)$ satisfies, 
\begin{eqnarray}
\label{willmore-graphs-weak-solution-ut}
\int_{\Omega} \frac{u_t}{Q} \varphi \dx &=& \int_{\Omega} \left( \mathbbm{E}_\gamma \grad w_\gamma \right) \cdot \grad \varphi - \frac{1}{2}\frac{w_\gamma^2}{Q^3} \grad u \cdot \grad \varphi \dx\ {\rm a.e.\ in}\ \left( 0, T \right) \\
\label{willmore-graphs-weak-solution-w}
\int_{\Omega} \frac{w_\gamma}{Q} \xi \dx &=& - \int_{\Omega} \grad_{\bfvec p} \gamma \cdot \grad \xi \dx.
\end{eqnarray}
with the initial condition
\begin{equation}
u \mid_{t=0} = u_0.
\label{willmore-graphs-weak-solution-initial-cond}
\end{equation}
The {\bf weak solution of anisotropic Willmore flow of graphs with homogeneous Neumann boundary conditions}
\begin{eqnarray*}
\Partial{u}{\nu} &=& 0 \quad {\rm on} \ \partial \Omega,\\
\mathbbm{E}_\gamma \grad w \cdot \nu &=& 0 \quad {\rm on} \ \partial \Omega,
\end{eqnarray*}
is a couple $u,w: \left( 0, T \right) \rightarrow H^1\left(\Omega\right)$ which for each test function $\varphi, \xi \in H^{1}\left(\Omega \right)$ and a.e. in $\left( 0, T \right)$ satisfies (\ref{willmore-graphs-weak-solution-ut})-(\ref{willmore-graphs-weak-solution-w}) and the initial condition (\ref{willmore-graphs-weak-solution-initial-cond}).
\end{definition}

For the proof of the numerical stability we will need the following theorem:
\begin{theorem}
\label{awg-energy-equality}
For the solution $u,w_\gamma$ of (\ref{willmore-graphs-weak-solution-ut})-(\ref{willmore-graphs-weak-solution-initial-cond}) with the zero Dirichlet boundary conditions the following energy equality holds:
\begin{equation}
\int_\Omega \frac{\left(\partial_t u \right)^2}{Q} \dx + \frac{1}{2} \frac{{\rm d}}{{\rm d}t} \int_\Omega H^2_\gamma Q \dx = 0.
\end{equation}
\end{theorem}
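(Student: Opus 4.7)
The plan is to differentiate the Willmore energy in time and identify the resulting integrals with the left-hand side of the weak equations, tested against suitable choices. Since $w_\gamma = Q H_\gamma$, the energy can be rewritten as $\frac{1}{2}\int_\Omega H_\gamma^2 Q\,dx = \frac{1}{2}\int_\Omega w_\gamma^2/Q\,dx$, so the target identity is
$$
\int_\Omega \frac{u_t^2}{Q}\,dx + \frac{d}{dt}\int_\Omega \frac{w_\gamma^2}{2Q}\,dx = 0.
$$
Expanding the time derivative and using $\partial_t Q = \grad u \cdot \grad u_t / Q$ gives
$$
\frac{1}{2}\frac{d}{dt}\int_\Omega \frac{w_\gamma^2}{Q}\,dx = \int_\Omega \frac{w_\gamma \partial_t w_\gamma}{Q}\,dx - \frac{1}{2}\int_\Omega \frac{w_\gamma^2 \grad u \cdot \grad u_t}{Q^3}\,dx.
$$

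First I would test the evolution equation (\ref{willmore-graphs-weak-solution-ut}) with $\varphi = u_t$ (which lies in $H^1_0$ under the zero Dirichlet assumption), obtaining
$$
\int_\Omega \frac{u_t^2}{Q}\,dx = \int_\Omega \mathbbm{E}_\gamma \grad w_\gamma \cdot \grad u_t\,dx - \frac{1}{2}\int_\Omega \frac{w_\gamma^2}{Q^3}\grad u \cdot \grad u_t\,dx.
$$
The symmetry of $\mathbbm{E}_\gamma$ (it is the Hessian of $\gamma$ in $\bfvec p$) lets me rewrite the first term on the right as $\int_\Omega \mathbbm{E}_\gamma \grad u_t \cdot \grad w_\gamma\,dx$. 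This is precisely the quantity I want to produce from the $w_\gamma$ equation.

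Next I differentiate (\ref{willmore-graphs-weak-solution-w}) in time with a fixed test function $\xi \in H^1_0$. Because $\grad_{\bfvec p}\gamma$ is evaluated at $(\grad u,-1)$, the chain rule yields $\partial_t(\grad_{\bfvec p}\gamma) = \mathbbm{E}_\gamma \grad u_t$. Choosing $\xi = w_\gamma$ (permissible under the assumption $w_\gamma \in H^1_0$) gives
$$
\int_\Omega \frac{\partial_t w_\gamma \cdot w_\gamma}{Q}\,dx - \int_\Omega \frac{w_\gamma^2\,\partial_t Q}{Q^2}\,dx = -\int_\Omega \mathbbm{E}_\gamma \grad u_t \cdot \grad w_\gamma\,dx.
$$

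Finally I substitute this expression for $\int_\Omega \mathbbm{E}_\gamma \grad u_t \cdot \grad w_\gamma\,dx$ back into the equation obtained from testing with $u_t$, and replace $\partial_t Q$ by $\grad u \cdot \grad u_t / Q$. The $w_\gamma^2 \grad u \cdot \grad u_t / Q^3$ contributions combine with coefficient $-\frac{1}{2}$, and the remaining terms regroup exactly into the expanded form of $\frac{1}{2}\frac{d}{dt}\int_\Omega w_\gamma^2/Q\,dx$ written above, yielding the claimed equality. The main obstacle is regularity: one must justify that $u_t$ and $w_\gamma$ are admissible test functions and that $\int_\Omega \frac{w_\gamma}{Q}\xi\,dx$ can be differentiated under the integral sign, so I would either impose enough smoothness on $u$ a priori or read the identity as a formal one underpinning the discrete energy estimate proved later by FDM techniques.
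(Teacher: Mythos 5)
Your proposal is correct and follows essentially the same route as the paper's proof: test the evolution equation with $\varphi=\partial_t u$, differentiate the curvature equation in time (using $\partial_t\grad_{\bfvec p}\gamma=\mathbbm{E}_\gamma\grad\partial_t u$) and test with $\xi=w_\gamma$, then cancel the $\mathbbm{E}_\gamma$ terms and use $\grad u\cdot\grad\partial_t u=Q\,\partial_t Q$ to recognize $\tfrac12\tfrac{d}{dt}\int_\Omega w_\gamma^2/Q\,\dx$. Your explicit remarks on the symmetry of $\mathbbm{E}_\gamma$ and on the regularity needed to justify the test functions and the differentiation under the integral are points the paper leaves implicit.
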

\begin{proof}
We differentiate (\ref{willmore-graphs-weak-solution-w}) with respect to $t$
\begin{equation}
\int_\Omega \frac{\partial_t w_{\gamma} \xi}{Q} \dx - 
\int_\Omega \frac{w_\gamma \xi \partial_t Q}{Q^2} \dx + 
\int_\Omega \mathbbm{E}_\gamma \grad \partial_t u \cdot \grad \xi = 0 \quad {\rm for\ all\ } \xi \in H_0^1\left(\Omega\right)
\label{awg-aux-9}
\end{equation}
which follows from
\begin{eqnarray*}
\frac{{\rm d}}{{\rm d}t} \grad_{\bfvec p} \gamma \left(\grad u, - 1 \right) \cdot \grad \xi &=&
\frac{{\rm d}}{{\rm d}t} \sum_{i=1}^n \Partial{}{p_i}\gamma \left(\grad u, - 1 \right) \Partial{\xi}{p_i} \\
&=&\sum_{i,j=1}^n \Partial{}{p_i} \Partial{}{p_j} \gamma \left(\grad u, -1\right) \Partial{}{t} \Partial{u}{x_j} \Partial{\xi}{p_i} \\
&=& \mathbbm{E}_\gamma \grad \partial_t u \cdot \grad \xi.
\end{eqnarray*}
Substituting $\varphi = \Partial{u}{t}$ in (\ref{willmore-graphs-weak-solution-ut}) and $\xi =w_\gamma$ in (\ref{awg-aux-9}) we have
\begin{eqnarray}
\label{wg-aux-6}
\int_{\Omega} \frac{\left(\partial_t u\right)^2}{Q} \dx - \int_{\Omega} \left( \mathbbm{E}_\gamma \grad w_\gamma \right) \cdot \grad \partial_t u \dx + \int_\Omega \frac{1}{2}\frac{w^2_\gamma}{Q^3} \grad u \cdot \grad \partial_t u \dx &=& 0, \\
\label{wg-aux-7}
\int_\Omega \frac{\partial_t w_\gamma w_\gamma}{Q} \dx - 
\int_\Omega \frac{w^2_\gamma \partial_t Q}{Q^2} \dx + 
\int_\Omega \left( \mathbbm{E}_\gamma \grad \partial_t u \right) \cdot \grad w_\gamma &=& 0 
\end{eqnarray}
The sum of (\ref{wg-aux-6}) and (\ref{wg-aux-7}) gives
$$
\int_{\Omega} \frac{\left( \partial_t u \right)^2}{Q} + 
\frac{\partial_t w_\gamma w_\gamma}{Q} - \frac{w^2_\gamma \partial_t Q}{Q^2}  + 
\frac{1}{2}\frac{w^2_\gamma}{Q^3} \grad u \cdot \grad \partial_t u \dx = 0.
\label{wg-aux-8}
$$
Since $\grad \Partial{u}{t} \cdot \grad u = \Partial{Q}{t}Q$, (\ref{wg-aux-8}) turns to
$$
\int_{\Omega} \frac{\left(\partial_t u \right)^2}{Q} + 
\frac{\partial_t w_\gamma w_\gamma}{Q} - \frac{1}{2}\frac{w^2_\gamma \partial_t Q}{Q^2} \dx = 0,
$$
which is indeed what we wanted to show because
$$
\frac{1}{2} \frac{{\rm d}}{{\rm d}t} H^2_\gamma Q = 
\frac{1}{2} \frac{{\rm d}}{{\rm d}t} \frac{w^2_\gamma}{Q} = 
\frac{\partial_t w_\gamma w_\gamma}{Q} - \frac{1}{2}\frac{w^2_\gamma \partial_t Q}{Q^2}.
$$
\end{proof}

We will demonstrate the anisotropic Willmore flow of graphs on the following (an)isotropies:
\begin{eqnarray}
\label{gamma-iso-gr}
\gamma_{iso} \left( \bfvec p \right) &:=& \sqrt{ 1 + \left| \bfvec p \right|^2}, \\ 
\label{def:quadratic-form-anisotropy}
\gamma_{\mathbbm G}\left( {\bf p}, -1 \right) &:=& \sqrt{1 + {\bf p}^T \mathbbm{G} {\bf p}}, \\
\label{def:gamma2}
\gamma_{abs}\left( \bfvec P \right) &:=& \sum_{i=1}^3 \sqrt{ P_i^2 + \epsilon_{abs} \sum_{j=1}^3 P_j^2},
\end{eqnarray}
where $\mathbbm G$ is symmetric positive definite matrix $\mathbbm G \in \R{2,2}$ and we use notation $\bfvec P = \left(\bfvec p, -1\right)$. In fact, $\gamma_{iso}$ represents the isotropic problem. The Wulf shapes $W$ of given anisotropies defined by
\begin{equation}
W = \bigcap_{\left| \bfvec q \right|=1} \left\{\bfvec x \in \R{n} \mid (\bfvec x, \bfvec q) \leq \gamma\left(\bfvec q\right) \right\},
\end{equation}
are depicted on the Figure \ref{fig:wulff-shapes}.
\begin{figure}
\center{
\includegraphics[width=3cm]{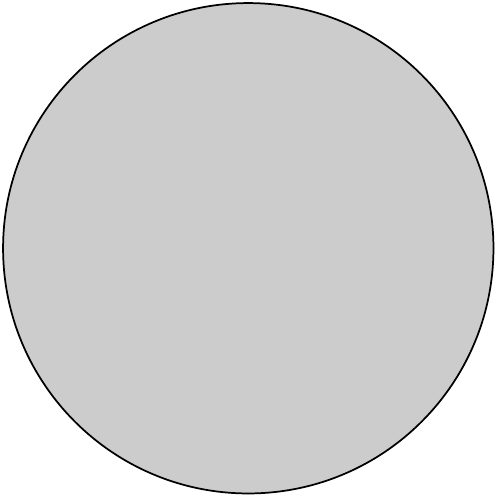}
\hspace{1cm}
\includegraphics[width=3cm]{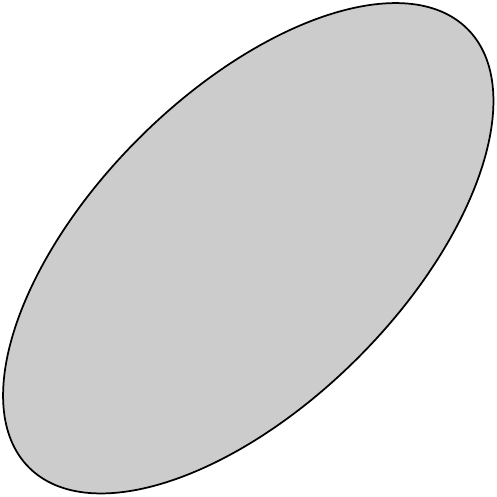}
\hspace{1cm}
\includegraphics[width=3cm]{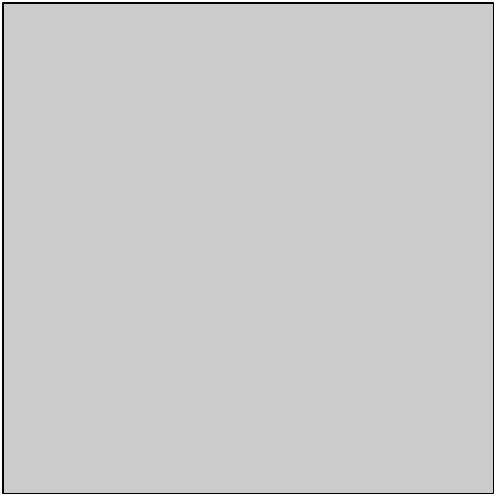}
}
\caption{The Wulf shapes of $\gamma_{iso}$, $\gamma_{\mathbbm{G}}$ and $\gamma_{abs}$ from left to right.}
\label{fig:wulff-shapes}
\end{figure}
Simple calculations show that
\begin{eqnarray}
H_{\gamma_{iso}} &=& \Div{\frac{\grad u}{\sqrt{ 1 + \left| \grad u \right|^2}}}, \quad
\mathbbm{E}_{\gamma_{iso}} = \frac{1}{Q}\left( \mathbbm{I} - \frac{\grad u}{Q} \otimes \frac{\grad u}{Q}\right), \\
H_{\gamma_{\mathbbm{G}}} &=& \Div{\frac{\mathbbm{G}\grad u}{\gamma_{\mathbbm{G}}}}, \quad
\mathbbm{E}_{\gamma_{\mathbbm{G}}} = \frac{1}{\gamma_{\mathbbm{G}}} \left(\mathbbm{G} - \frac{\mathbbm{G} \grad u}{\gamma_{\mathbbm{G}}} \otimes \frac{\mathbbm{G}\grad u}{\gamma_{\mathbbm{G}}}\right).
\end{eqnarray}
It is difficult to express $H_{\gamma_{abs}}$ and $\mathbbm{E}_{\gamma_{abs}}$ in some compact form and so we only show partial derivatives of $\gamma_{abs}$ with respect to $p_i$ and $p_j$ for $i,j=1,2$.
\begin{eqnarray*}
\gamma_{abs,p_i} &=& \sum_{j=1}^3 \frac{\epsilon_{abs} p_i}{\sqrt{P_j^2 + \epsilon_{abs} \sum_{k=1}^3 P_k^2}} + \frac{p_i}{\sqrt{p_i^2 + \epsilon_{abs} \sum_{j=1}^3 P_j^2}} \quad {\rm for}\quad  i = 1,2, \\
\gamma_{abs,p_ip_i} &=& \sum_{j=1}^3 \left( \frac{\epsilon_{abs}}{\sqrt{P_j^2 + \epsilon_{abs}\sum_{k=1}^3 P_k^2}} - 
                                        \frac{\epsilon_{abs}^2 p_i^2}
                                             {\left(P_j^2 + \epsilon_{abs}\sum_{k=1}^3 P_k^2 \right)^\frac{3}{2}} \right) \nonumber \\
                &+&              \frac{1}{\sqrt{p_i^2 + \epsilon_{abs} \sum_{j=1}^3P_j^2}} -
                                \frac{p_i^2}{\left( p_i^2 + \epsilon_{abs} \sum_{j=1}^3 P_j^2\right)^\frac{3}{2}} \quad {\rm for} \quad i = 1,2, \\
\gamma_{abs,p_ip_j} &=& - \sum_{k=1}^3\frac{\epsilon_{abs}^2 p_ip_j}{\left(P_k^2 + \epsilon_{abs} \sum_{l=1}^3 P_l^2\right)^\frac{3}{2}}
                    - \sum_{k=1}^2\frac{\epsilon_{abs} p_ip_j}{\left(P_k^2 + \epsilon_{abs} \sum_{l=1}^3 P_l^2\right)^\frac{3}{2}}.
\end{eqnarray*}

\section{Space discretization}

\subsection{The complementary-finite volume method}

In \cite{Oberhuber-2009} we applied the complementary-finite volume method for the space discretization of the anisotropic surface-diffusion flow. In the same manner we discretize even the problem from the Definition \ref{def:aniso-willmore-fl-graphs}. Let $h_1, h_2$ be space steps such that $h_1 = \frac{L_1}{N_1}$ and $h_2=\frac{L_2}{N_2}$ for some $N_1,N_2 \in \mathbbm{N}^+$. We define a numerical grid, its closure and its boundary as 
\begin{eqnarray}
\label{def:numerical-grid-interior}
\omega_h &=& \left\{ (ih_1, jh_2) \mid i = 1 \cdots N_1 - 1, j = 1 \cdots N_2 - 1 \right\}, \\
\label{def:numerical-grid-closure}
\clos{\omega}_h &=& \left\{ (ih_1, jh_2) \mid i = 0 \cdots N_1, j = 0 \cdots N_2 \right\}, \\
\label{def:numerical-grid-boundary}
\partial \omega_h &=& \clos{\omega_h} \setminus \omega_h.
\end{eqnarray}
For $u \in C\left(\clos{\Omega}\right)$ we define its piecewise constant approximation on $\clos{\omega}$ as a grid function $u^h$ defined as $u^h\left(ih_1,jh_2\right) := u^h_{ij} := u\left(ih_1,jh_2\right)$. We also define a dual mesh $V_h$ as (see the Figure \ref{fig:dual-mesh} a))
\begin{eqnarray}
V_h &\equiv& \left\{ v_{ij} =  \left[ \left(i-\frac{1}{2}\right) h_1, \left(i+\frac{1}{2}\right) h_1 \right] \times
                             \left[ \left(j-\frac{1}{2}\right) h_2, \left(j+\frac{1}{2}\right) h_2 \right] \mid  \right. \nonumber \\
                             &&  i = 1 \cdots N_1-1, j = 1 \cdots N_2-1 \bigg\}.
\label{dual-fvm-mesh}
\end{eqnarray}
For $0 < i < N_1$, $0 < j < N_2$, $i$ and $j$ fixed, consider a finite volume $v_{ij}$ of the dual mesh $V_h$, denote its interior as $\Omega_{ij}$, its boundary as $\Gamma_{ij}$ and let $\mu\left(\Omega_{ij}\right)$ be the volume of $\Omega_{ij}$.  We also denote all the neighboring volumes of the volume $v_{ij}$ as $\mathcal{N}_{ij}$. For all finite volumes $v_{ij}$ of the dual mesh $V_h$, the boundary $\Gamma_{ij}$ consists of four linear segments. We denote them as $\Gamma_{ij,\bar{i}\bar{j}}$. It means that $\Gamma_{ij,\bar{i}\bar{j}}$ is a boundary of the finite volume $v_{ij}$ between nodes $(i,j)$ and $(\bar{i},\bar{j})$. By $l_{ij,\bar{i}\bar{j}}$ we denote the length of this part of $\Gamma_{ij}$.
\begin{figure}
\center{
\includegraphics[width=6cm]{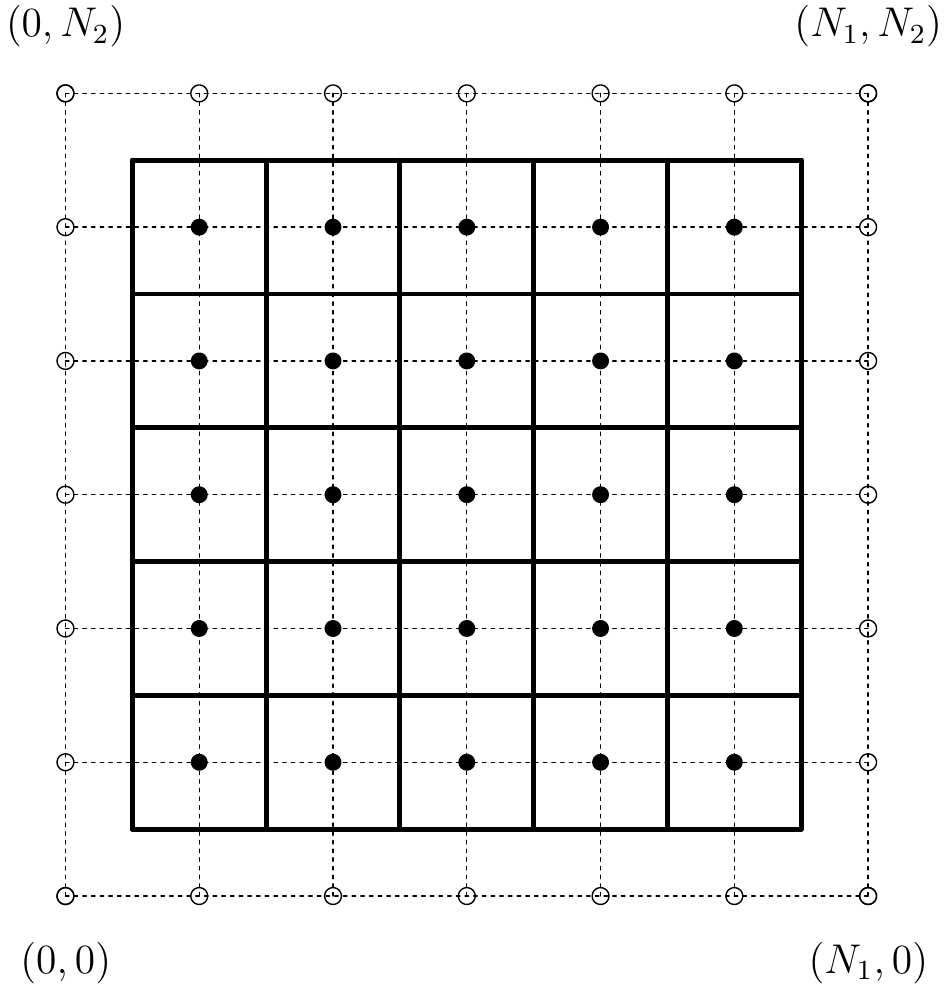}
\includegraphics[width=6cm]{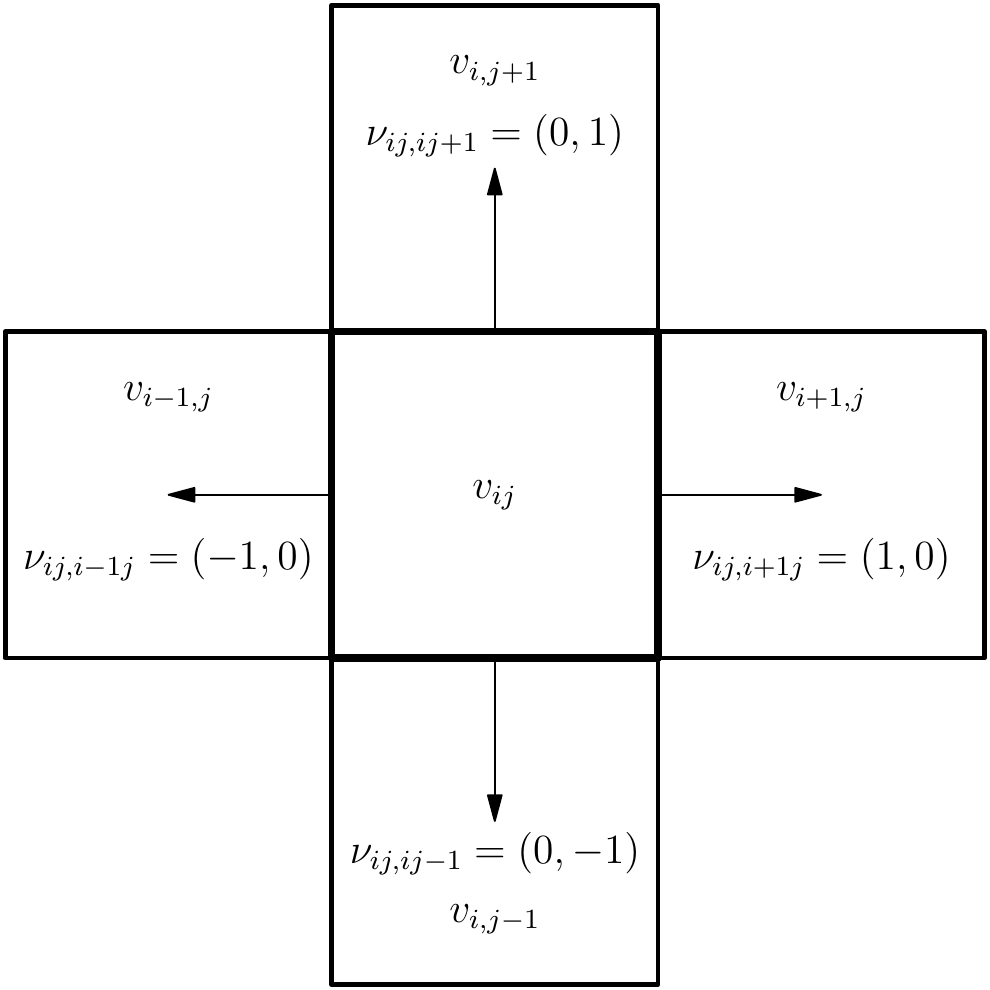}

a) \hspace{6cm} b)
}
\caption{(a) Dual mesh for the complementary finite volumes method - dots denote $\omega_h$ (\ref{def:numerical-grid-interior}), circles denote $\partial \omega_h$ (\ref{def:numerical-grid-boundary}) and solid lines stand for $V_h$ (\ref{dual-fvm-mesh}). (b) Notation for $\nu_{ij,\bar{i}\bar{j}}$.}
\label{fig:dual-mesh}
\end{figure}
We approximate the partial derivatives of $u$ on the boundary of the finite volume $v_{ij}$ as (here $\partial^h_{x_1}$ resp. $\partial^h_{x_2}$ stands for the finite difference w.r.t. $x_1$ resp. $x_2$):
\begin{eqnarray*}
\label{approx:partial-u-ijbaribarj-1}
\partial_{x_1}^h u^h_{ij,i+1j} &=& \frac{u^h_{i+1j} - u^h_{ij}}{h_1} \quad , \quad
\partial_{x_1}^h u^h_{ij,i-1j} = \frac{u^h_{ij} - u^h_{i-1j}}{h_1}, \\
\partial_{x_2}^h u^h_{ij,ij+1} &=& \frac{u^h_{ij+1} - u^h_{ij}}{h_2} \quad , \quad
\partial_{x_2}^h u^h_{ij,ij-1} = \frac{u^h_{ij} - u^h_{ij-1}}{h_2},
\end{eqnarray*}
and
\begin{eqnarray*}
\partial_{x_2}^h u^h_{ij,i+1j} &=& \frac{u^h_{ij,i+1j+1} - u^h_{ij,i+1j-1}}{h_2} \quad , \quad
\partial_{x_2}^h u^h_{ij,i-1j} = \frac{u^h_{ij,i-1j+1} - u^h_{ij,i-1j-1}}{h_2}, \\
\partial_{x_1}^h u^h_{ij,ij+1} &=& \frac{u^h_{ij,i+1j+1} - u^h_{ij,i-1j+1}}{h_1} \quad , \quad
\partial_{x_1}^h u_{ij,ij-1} = \frac{u_{ij,i+1j-1} - u_{ij,i-1j-1}}{h_1}, 
\end{eqnarray*}
where we denote
\begin{eqnarray*}
u^h_{ij,i+1j+1} &=& \frac{1}{4} \left( u^h_{ij} + u^h_{i+1j} + u^h_{ij+1} + u^h_{i+1j+1}\right) \\
u^h_{ij,i+1j-1} &=& \frac{1}{4} \left( u^h_{ij} + u^h_{i+1j} + u^h_{ij-1} + u^h_{i+1j-1}\right) \\
u^h_{ij,i-1j+1} &=& \frac{1}{4} \left( u^h_{ij} + u^h_{i-1j} + u^h_{ij+1} + u^h_{i-1j+1}\right) \\
u^h_{ij,i-1j-1} &=& \frac{1}{4} \left( u^h_{ij} + u^h_{i-1j} + u^h_{ij-1} + u^h_{i-1j-1}\right).
\end{eqnarray*}
The quantity $Q$ on the boundary of $\Gamma_{ij}$ is approximated as 
\begin{eqnarray*}
Q^h_{ij,i+1j} &=& \sqrt{1 + \left( \partial_{x_1}^h u^h_{ij,i+1j} \right)^2 + \left( \partial_{x_2}^h u^h_{ij,i+1j} \right)^2}, \\
Q^h_{ij,ij+1} &=& \sqrt{1 + \left( \partial_{x_1}^h u^h_{ij,ij+1} \right)^2 + \left( \partial_{x_2}^h u^h_{ij,ij+1} \right)^2}, \\
Q^h_{ij,i-1j} &=& \sqrt{1 + \left( \partial_{x_1}^h u^h_{ij,i-1j} \right)^2 + \left( \partial_{x_2}^h u^h_{ij,i-1j} \right)^2}, \\
Q^h_{ij,ij-1} &=& \sqrt{1 + \left( \partial_{x_1}^h u^h_{ij,ij-1} \right)^2 + \left( \partial_{x_2}^h u^h_{ij,ij-1} \right)^2}, 
\end{eqnarray*}
and $Q$ inside the finite volume $v_{ij}$ as
$$
Q^h_{ij} = \frac{1}{4} \left(Q^h_{ij,i+1j} + Q^h_{ij,ij+1} + Q^h_{ij,i-1j} + Q^h_{ij,ij-1}\right).
$$
Integrating (\ref{aniso-mean-curvature}) over a finite volume $\Omega_{ij}$ and applying the Stokes formula we obtain
\begin{equation}
\int_{\Omega_{ij}} H_\gamma \dx = \int_{\Omega_{ij}} \Div{\nabla_{\bfvec p} \gamma} \dx = \int_{\Gamma_{ij}} \nabla_{\bfvec p} \gamma \cdot \nu \dS.
\end{equation}
where $\nu$ denotes the outer unit normal vector to the finite volume boundary $\Gamma_{ij}$. We approximate the term on the left as
\begin{equation}
\label{aux:iso-mean-crv-1}
\int_{\Omega_{ij}} H_\gamma \dx \approx \mu\left(\Omega_{ij}\right) H^h_{\gamma,{ij}}
\end{equation}
and the term on the right as
\begin{equation}
\int_{\Gamma_{ij}} \nabla_{\bfvec p} \gamma \cdot \nu \dS = \sum_{v_{\bar{i}\bar{j}} \in \mathcal{N}_{ij}} \int_{\Gamma_{ij,\bar{i}\bar{j}}} \nabla_{\bfvec p} \gamma \cdot \nu \dS
\end{equation}
For the inner finite volume $v_{ij} \in V_h$, there are four different neighbors $v_{\bar{i}\bar{j}} \in \mathcal{N}_{ij}$ (see the Figure \ref{dual-fvm-mesh}). All the boundaries $\Gamma_{ij,\bar{i}\bar{j}}$ are linear segments and so $\nu = \nu_{ij,\bar{i}\bar{j}}$ is constant there. Moreover we assume that $\nabla_{\bfvec p} \gamma$ is constant along $\Gamma_{ij,\bar{i}\bar{j}}$ too. It gives
\begin{equation}
\label{aux:iso-mean-crv-2}
\sum_{v_{\bar{i}\bar{j}} \in \mathcal{N}_{ij}} \int_{\Gamma_{ij,\bar{i}\bar{j}}} \nabla_{\bfvec p} \gamma \cdot \nu \dS \approx
\sum_{v_{\bar{i}\bar{j}} \in \mathcal{N}_{ij}} l_{ij,\bar{i}\bar{j}} \nabla_{\bfvec p} \gamma_{ij,\bar{i}\bar{j}} \cdot \nu_{ij,\bar{i}\bar{j}} 
\end{equation}
where
\begin{equation}
\label{eqn:aniso-gamma-ij-eval}
\nabla_{\bfvec p} \gamma_{ij,\bar{i}\bar{j}} =
\left( \partial_{p_1} \gamma_{ij,\bar{i}\bar{j}}, \partial_{p_2} \gamma_{ij,\bar{i}\bar{j}} \right)^T 
= \left( \partial_{p_1} \gamma\left( \grad u^h_{ij,\bar{i}\bar{j}},-1 \right), \partial_{p_2} \gamma \left( \grad u^h_{ij,\bar{i}\bar{j}}, -1 \right) \right)^T.
\end{equation}
Putting (\ref{aux:iso-mean-crv-1}) and (\ref{aux:iso-mean-crv-2}) together we get
\begin{equation}
\label{aniso-mean-curvature-stokes-formula}
\mu\left({\Omega}\right) H_{\gamma,ij}^h = \sum_{v_{\bar{i}\bar{j}} \in \mathcal{N}_{ij}} l_{ij,\bar{i}\bar{j}} \nabla_{\bfvec p} \gamma_{ij,\bar{i}\bar{j}}  \nu_{ij,\bar{i}\bar{j}} \dS,
\end{equation}
For the regular dual mesh (\ref{dual-fvm-mesh}) we have
\begin{eqnarray}
H_{\gamma,ij}^h &=& \frac{1}{h_1 h_2}
\bigg(
h_2 \grad_{\bfvec p} \gamma_{ij,i+1j} \cdot \left(1,0\right)^T+
h_1 \grad_{\bfvec p} \gamma_{ij,ij+1} \cdot \left(0,1\right)^T  \nonumber \\
&+& h_2 \grad_{\bfvec p} \gamma_{ij,i-1j} \cdot \left(-1,0\right)^T+ 
h_1 \grad_{\bfvec p} \gamma_{ij,ij-1} \cdot \left(0,-1\right)^T
\bigg) \nonumber \\
&=& 
\label{eqn:co-fvm-aniso-mean-crv-graphs}
\left(\frac{\partial_{p_1}\gamma_{ij,i+1j} - \partial_{p_1}\gamma_{ij,i-1,j}}{h_1} + \frac{\partial_{p_2}\gamma_{ij,ij+1} - \partial_{p_2}\gamma_{ij,ij-1}}{h_2}\right),
\end{eqnarray}
and the approximation of $w_\gamma=Q H_\gamma$ on the finite volume $\Omega_{ij}$ is
\begin{equation}
w_{\gamma,ij}^h = Q^h_{ij} H_{\gamma,ij}^h.
\end{equation}
We set
\begin{eqnarray}
\label{approx:aniso-w-ij-bari-barj-1}
w^h_{\gamma,ij,i+1j} = \frac{1}{2} \left(w^h_{\gamma,ij} + w^h_{\gamma,i+1j}\right) \quad, \quad 
w^h_{\gamma,ij,ij+1} = \frac{1}{2} \left(w^h_{\gamma,ij} + w^h_{\gamma,ij+1}\right), \\
w^h_{\gamma,ij,i-1j} = \frac{1}{2} \left(w^h_{\gamma,ij} + w^h_{\gamma,i-1j}\right) \quad, \quad
\label{approx:aniso-w-ij-bari-barj-4}
w^h_{\gamma,ij,ij-1} = \frac{1}{2} \left(w^h_{\gamma,ij} + w^h_{\gamma,ij-1}\right).
\end{eqnarray}
Integrating (\ref{anisotropic-willmore-graphs-ut}) over $\Omega_{ij}$ and applying the Stokes theorem we get
\begin{eqnarray}
\label{aniso-willmore-u-t-stokes}
\int_{\Omega_{ij}} \frac{1}{Q} \Partial{u}{t} \dx &=&
-\int_{\Gamma_{ij}} \mathbb{E}_\gamma \grad w_\gamma \nu -
 \frac{1}{2} \frac{ w_\gamma^2}{Q^3} \Partial{u}{\nu} \dS 
\end{eqnarray}
where $\nu$ is the unit outer normal of the boundary $\Gamma_{ij}$. The left hand side is approximated as follows:
\begin{equation}
\label{aniso-willmore-u-t-stokes-lhs-approx}
\int_{\Omega_{ij}} \frac{1}{Q} \Partial{u}{t} \dx \approx 
 \frac{\mu\left(\Omega_{ij}\right)}{Q^h_{ij}} \frac{{\rm d}}{{\rm d}t} u^h_{ij} =
h_1 h_2 \frac{1}{Q^h_{ij}} \frac{{\rm d}}{{\rm d}t} u^h_{ij},
\end{equation}
where we assume that $u^h_{ij}$ and $Q^h_{ij}$ are constant on the element $v_{ij}$. For the integral on the right hand side of (\ref{aniso-willmore-u-t-stokes}) we have
\begin{eqnarray}
&&-\int_{\Gamma_{ij}} \mathbb{E}_\gamma \grad w_\gamma \nu -
 \frac{1}{2} \frac{ w^2_\gamma}{Q^3} \Partial{u}{\nu} \dS \approx \nonumber \\
&&- \sum_{v_{\bar{i}\bar{j}} \in \mathcal{N}_{ij}} l_{ij,\bar{i}\bar{j}} 
   \left( \mathbb{E}^h_{\gamma,ij,\bar{i}\bar{j}} \nabla w^h_{\gamma,ij,\bar{i}\bar{j}} \nu_{ij,\bar{i}\bar{j}} - 
          \frac{1}{2} \frac{\left(w^h_{\gamma,ij,\bar{i}\bar{j}}\right)^2}{\left( Q^h_{ij,\bar{i}\bar{j}}\right)^3} \nabla_{ij,\bar{i}\bar{j}} u^h_{ij} \nu_{ij,\bar{i}\bar{j}} \right).
\label{aniso-willmore-u-t-stokes-rhs-approx}
\end{eqnarray}
with the usual notation. Putting (\ref{aniso-willmore-u-t-stokes-lhs-approx}) and (\ref{aniso-willmore-u-t-stokes-rhs-approx}) together gives
$$
\frac{{\rm d}}{{\rm d}t} u^h_{ij} =
- \frac{Q^h_{ij}}{\mu\left(\Omega_{ij}\right)} \sum_{\bar{i}\bar{j} \in \mathcal{N}_{ij}} l_{ij,\bar{i}\bar{j}} 
   \left( \mathbb{E}^h_{\gamma,ij,\bar{i}\bar{j}} \nabla w^h_{\gamma,ij,\bar{i}\bar{j}} \nu_{ij,\bar{i}\bar{j}} - 
          \frac{1}{2} \frac{\left(w^h_{\gamma,ij,\bar{i}\bar{j}}\right)^2}{\left( Q^h_{ij,\bar{i}\bar{j}}\right)^3} \nabla_{ij\bar{i}\bar{j}} u^h_{ij} \nu_{ij,\bar{i}\bar{j}} \right).
$$
In the terms of the regular dual mesh (\ref{dual-fvm-mesh}) we get
\begin{eqnarray*}
\frac{{\rm d}}{{\rm d}t} u^h_{ij} &=&
- \frac{Q^h_{ij}}{h_1h_2}  
   \Bigg[h_2 \left( \mathbb{E}^h_{\gamma,ij,i+1j} \nabla w^h_{\gamma,ij,i+1j} \left(1,0\right)^T - 
          \frac{1}{2} \frac{\left(w^h_{\gamma,ij,i+1j}\right)^2}{\left( Q^h_{ij,i+1j}\right)^3} \nabla u^h_{ij,i+1j} \left(1,0\right)^T \right) \nonumber \\
&+& h_1 \left(  \mathbb{E}^h_{\gamma,ij,ij+1} \nabla w^h_{\gamma,ij,ij+1} \left(0,1\right)^T - 
          \frac{1}{2} \frac{\left(w^h_{\gamma,ij,ij+1}\right)^2}{\left( Q^h_{ij,ij+1}\right)^3} \nabla u^h_{ij,ij+1} \left(0,1\right)^T \right) \nonumber \\
&+& h_2 \left(  \mathbb{E}^h_{\gamma,ij,i-1j} \nabla w^h_{\gamma,ij,i-1j} \left(-1,0\right)^T - 
          \frac{1}{2} \frac{\left(w^h_{\gamma,ij,i-1j}\right)^2}{\left( Q^h_{ij,i-1j}\right)^3} \nabla u^h_{ij,i-1j} \left(-1,0\right)^T \right) \nonumber \\
\label{def:semidiscrete-aniso-willmore-ut}          
&+& h_1 \left(  \mathbb{E}^h_{\gamma,ij,ij-1} \nabla w^h_{\gamma,ij,ij-1} \left(0,-1\right)^T - 
          \frac{1}{2} \frac{\left(w^h_{\gamma,ij,ij-1}\right)^2}{\left( Q^h_{ij,ij-1}\right)^3} \nabla u^h_{ij,ij-1} \left(0,-1\right)^T \right) \Bigg].
\end{eqnarray*}
%
%
%
where for 
\begin{equation}
\label{approx:aniso-E-ij-bari-barj}
\mathbb{E}^h_{\gamma,ij,\bar{i}\bar{j}} = \left(
\begin{array}{cc}
\mathbbm{E}^h_{\gamma,11,ij,\bar{i}\bar{j}} & \mathbbm{E}^h_{\gamma,12,ij,\bar{i}\bar{j}} \\
\mathbbm{E}^h_{\gamma,21,ij,\bar{i}\bar{j}} & \mathbbm{E}^h_{\gamma,22,ij,\bar{i}\bar{j}}
\end{array}
\right),
\end{equation}
we have
\begin{eqnarray*}
\mathbb{E}^h_{\gamma,ij,i+1j} &=& \partial_{p_1} \partial_{p_1} \gamma\left( \grad u^h_{ij,i+1j}, -1 \right) \quad , \quad
\mathbb{E}^h_{\gamma,ij,ij+1} = \partial_{p_1} \partial_{p_2}  \gamma\left( \grad u^h_{ij,ij+1}, -1 \right), \\ 
\mathbb{E}^h_{\gamma,ij,i-1j} &=& \partial_{p_1} \partial_{p_1}  \gamma\left( \grad u^h_{ij,i-1j}, -1 \right) \quad , \quad
\mathbb{E}^h_{\gamma,ij,ij-1} =  \partial_{p_1} \partial_{p_2}  \gamma\left( \grad u^h_{ij,ij-1}, -1 \right). 
\end{eqnarray*}
The Neumann boundary condition $\partial_\nu u = 0$ on $\partial \Omega$ takes the following discrete form
\begin{eqnarray}
\label{fvm-aniso-willmore-neumann-bc-u-1}
{\rm if}\ i = 1\ {\rm then}\ \nu = \left(-1,0\right) &\Rightarrow& 
   \frac{1}{h_1}\left(u^h_{1,j}-u^h_{0,j}\right) = 0, \\
{\rm if}\ i = N_1-1\ {\rm then}\ \nu = \left(1,0\right) &\Rightarrow& 
   \frac{1}{h_1}\left(u^h_{N_1,j}-u^h_{N_1-1,j}\right) = 0, \\
{\rm if}\ j = 1\ {\rm then}\ \nu = \left(0,-1\right) &\Rightarrow& 
   \frac{1}{h_2} \left(u^h_{i,1}-u^h_{i,0}\right)=0,\\
\label{fvm-aniso-willmore-neumann-bc-u-2}
{\rm if}\ j = N_2-1\ {\rm then}\ \nu = \left(0,1\right) &\Rightarrow& 
   \frac{1}{h_2}\left(u^h_{i,N_2}-u^h_{i,N_2-1}\right) = 0
\end{eqnarray}
and from (\ref{aniso-willmore-graphs-neumann-bc}) we get
\begin{eqnarray}
\label{fvm-aniso-willmore-neumann-bc-w-1}
{\rm if}\ i = 1\ {\rm then}\ \nu = \left(-1,0\right) &\Rightarrow& 
   \mathbbm{E}_{\gamma,11,1j,0j} \partial_{x_1} w^h_{\gamma,1j,0j} +
   \mathbbm{E}_{\gamma,12,1j,0j} \partial_{x_2} w^h_{\gamma,1j,0j} = 0, \nonumber \\ \\
{\rm if}\ i = N_1-1\ {\rm then}\ \nu = \left(1,0\right) &\Rightarrow& 
     \mathbbm{E}_{\gamma,11,N_1-1j,N_1j} \partial_{x_1} w^h_{\gamma,N_1-1j,N_1j} + \nonumber \\
&&   \mathbbm{E}_{\gamma,12,N_1-1j,N_1j} \partial_{x_2} w^h_{\gamma,N_1-1j,N_1j} = 0, \\
{\rm if}\ j = 1\ {\rm then}\ \nu = \left(0,-1\right) &\Rightarrow& 
   \mathbbm{E}_{\gamma,21,i1,i0} \partial_{x_1} w^h_{\gamma,i1,i0} + 
   \mathbbm{E}_{\gamma,22,i1,i0} \partial_{x_2} w^h_{\gamma,i1,i0} = 0, \nonumber \\ \\ 
\label{fvm-aniso-willmore-neumann-bc-w-2}
{\rm if}\ j = N_2-1\ {\rm then}\ \nu = \left(0,1\right) &\Rightarrow& 
   \mathbbm{E}_{\gamma,21,iN_2-1,iN_2} \partial_{x_1} w^h_{\gamma,iN_2-1.iN_2} +  \nonumber \\
&&   \mathbbm{E}_{\gamma,22,iN_2-1,iN_2} \partial_{x_2} w^h_{\gamma,iN_2-1,N_2} = 0.
\end{eqnarray}

\subsection{The finite difference method}
We will prove the discrete version of the Theorem \ref{awg-energy-equality} in very similar manner as in \cite{Oberhuber-2006}. For this purpose we need to re-formulate the complementary finite volume scheme by the finite difference method. We complete the numerical grid with virtual nodes representing the finite volumes edges and we denote the new grid by $\varpi_h$. We index the nodes of $\varpi_h$ by indices $i \pm \frac 1{2}$ and $j \pm \frac 1{2}$. We define $u^h_{i \pm \frac 1{2} j \pm \frac 1{2}} = \frac 1{2} \left( u^h_{i \pm 1, j \pm 1} + u^h_{ij} \right)$ and denote by the uppercase a grid function with doubled indices i.e.
\begin{equation}
\label{double-ind}
\Uh_{kl} = u^h_{\frac k{2} \frac l{2}} \quad {\rm for} \quad k, l \in 0 \cdots 2N.
\end{equation}
We introduce the finite differences as follows:
\begin{eqnarray*}
\Uh_{f.,kl} = 2\frac{\Uh_{k+1,l} - \Uh_{kl}}{h} &\ , \ &
\Uh_{b.,kl} = 2\frac{\Uh_{kl}-\Uh_{k-1,l}}{h}, \\
\Uh_{.f,kl} = 2\frac{\Uh_{k,l+1} - \Uh_{kl}}{h}&,&
\Uh_{.b,kl} = 2\frac{\Uh_{kl}-\Uh_{k,l-1}}{h}, \\
\Uh_{c.,kl} = \frac{1}{2} \left(\Uh_{f.,kl}+\Uh_{b.,kl}\right)&,&
\Uh_{.c,kl} = \frac{1}{2} \left(\Uh_{.f,kl}+\Uh_{.b,kl}\right) 
\end{eqnarray*}
To approximate the gradient of $u$ we define
\begin{eqnarray*}
\gradh u^h_{ij} = \left( \Uh_{c.,2i,2j},\Uh_{.c,2i,2j} \right) \quad {\rm for} \quad i,j \in 1 \cdots N - 1
\end{eqnarray*}
Note that since we define $\gradh u^h_{ij}$ in terms of $\Uh_{kl}$ we can also write $\gradh u^h_{i \pm \frac 1{2},j \pm \frac 1{2}}$ for $i,j = 1 \cdots N - 1$.
%
%
The terms in (\ref{anisotropic-willmore-graphs-ut})-(\ref{anisotropic-willmore-graphs-w}) are approximated as follows
\begin{eqnarray*}
Q^h_{i \pm \frac 1{2}, j \pm \frac 1{2}} = \sqrt{1 + \left| \gradh u^h_{i \pm \frac 1{2}, j \pm \frac 1{2}} \right|^2}&,& 
Q^h_{ij} = \frac{1}{4} \sum_{\zeta,\eta \in \left\{ -1,1\right) \atop \left| \zeta \right| + \left| \eta \right| = 1} Q^h_{i+\frac \zeta{2},j+ \frac \eta{2}},\\
\mathbbm{E}^h_{\gamma,ij} = \mathbbm{E}_\gamma \left( u^h_{ij} \right)&,& \quad H^h_{\gamma,ij} = H_\gamma \left( u_{ij}^h \right)
\end{eqnarray*}
and the scheme has a form
\begin{eqnarray}
\label{discrete-willmore}
\frac{{\rm d} u^h}{{\rm d}t} &=& - Q^h_{ij} \gradh \cdot \left( \mathbbm{E}^h_{\gamma,ij} \gradh w^h_{\gamma,ij} - \frac{1}{2} \frac{\left( w^h_{\gamma,ij}\right)^2}{\left( Q^h_{ij}\right)^3} \gradh u^{h}_{ij} \right), \\
\label{discrete-willmore-w}
w^h_{\gamma,ij} &=& Q^h_{ij} H^h_{\gamma,ij}
\end{eqnarray}
Approximation of the Dirichlet boundary conditions is straightforward and the Neumann boundary conditions are handled as in (\ref{fvm-aniso-willmore-neumann-bc-u-1})--(\ref{fvm-aniso-willmore-neumann-bc-w-2}).
In what follows we define several discrete scalar products and we prove the discrete version of the Green formula. Assume having $f,g : \overline{\omega}_h \rightarrow \mathbbm{R}$, $\boldsymbol{f}: \overline{\omega}_h \rightarrow \mathbbm{R}^2$ and the related functions $F, G: \varpi_h \rightarrow \mathbbm{R}$, $\boldsymbol{F}: \varpi_h \rightarrow \mathbbm{R}^2$ given by (\ref{double-ind}) we define
\begin{eqnarray*}
\left[ F, G \right]_{pq}^{PQ} &=& \frac {h_1 h_2}{4} \sum_{k=p,l=q}^{P,Q} F_{kl} G_{kl}, \quad
\left( f, g \right)_h = \left( F,G \right)_h = \left[ F, G \right]_{11}^{2N-1,2N-1},\\
\left( F, G_{c.} \right)_c &=& \left[ F, G_{f.} \right]_{0,1}^{2N-1,2N-1} + \left[ F, G_{b.} \right]_{1,1}^{2N,2N-1}, \\
\left( F, G_{.c} \right)_c &=& \left[ F, G_{.f} \right]_{1,0}^{2N-1,2N-1} + \left[ F, G_{.b} \right]_{1,1}^{2N-1,2N}, \\
\left( \boldsymbol{F}, \gradh G \right)_c &=& \left( F^1, G_{c.} \right)_c + \left( F^2, G_{.c} \right)_c
\end{eqnarray*}
For the purpose of analysis, we will need the following grid version of the Green formula
\begin{lemma}
\label{lemma:discrete-green}
Let $u^h,v^h: \bar\omega_h \to {\mathbb R}$ and $U^h, V^h: \varpi_h \rightarrow \mathbbm{R}$ related to $u,v$ by (\ref{double-ind}). Then the Green formula is valid:
\begin{eqnarray*}
\label{discrete_green}
\left( \gradh u^h, v^h \right)_h &=& - \left( u^h, \gradh v^h \right)_c \\
&+& \frac {h_1}{2} \left( \sum_{l=1}^{2N-1} \left[  \left( \Uh_{2N-1,l}+\Uh_{2N,l} \right) \Vh_{2N,l} - \left( \Uh_{0l} + \Uh_{1l} \right) \Vh_{0l} \right]  \right) \\
&+& \frac{h_2}{2}\left( \sum_{k=1}^{2N-1}\left[  \left( \Uh_{k,2N-1}+\Uh_{k,2N} \right) \Vh_{k,2N} - \left( \Uh_{k0} + \Uh_{k1} \right) \Vh_{k0} \right] \right)
\end{eqnarray*}
\end{lemma}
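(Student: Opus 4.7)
The plan is to verify the identity by purely algebraic summation by parts on the doubled-index grid $\varpi_h$, treating each coordinate direction separately. First I would reduce the statement to two one-dimensional identities: one involving $(F,G_{c.})_c$ together with the $V^h$-weighted pairing in the $x_1$ direction, and its mirror image in the $x_2$ direction. Writing out the central difference as $U^h_{c.,kl}=\frac{1}{2}(U^h_{f.,kl}+U^h_{b.,kl})$ shows that $(\gradh u^h,v^h)_h$ naturally decomposes into a forward-difference sum and a backward-difference sum of $U^h$, and these are exactly the two pieces that, after transferring the differences onto $V^h$, will reassemble into $(u^h,\gradh v^h)_c=(U^h,V^h_{c.})_c+(U^h,V^h_{.c})_c$.

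Second, I would carry out the $x_1$-direction summation by parts for fixed $l$. Using the elementary Abel identity
\begin{equation*}
\sum_{k=1}^{2N-1} V^h_{kl}\,(U^h_{k+1,l}-U^h_{kl})
= -\sum_{k=1}^{2N-1} U^h_{kl}\,(V^h_{kl}-V^h_{k-1,l}) + U^h_{2N,l}V^h_{2N-1,l}-U^h_{1,l}V^h_{0l},
\end{equation*}
and its backward analogue, one obtains two sums of the form $\sum_k U^h_{kl}V^h_{f.,kl}$ and $\sum_k U^h_{kl}V^h_{b.,kl}$ supported on the index ranges $[0,2N-1]$ and $[1,2N]$ respectively. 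These are precisely the two blocks that appear in the definition of $(F,G_{c.})_c$, so once the common factor $\tfrac{1}{2}$ is absorbed, the interior terms fit together into $(U^h,V^h_{c.})_c$ without overlap. The residual boundary contributions at $k=0,1$ and $k=2N-1,2N$ combine under the same $\tfrac{1}{2}$-averaging into the symmetric expressions $(U^h_{0l}+U^h_{1l})V^h_{0l}$ and $(U^h_{2N-1,l}+U^h_{2N,l})V^h_{2N,l}$, multiplied by $\tfrac{h_1}{2}$.

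Third, the $x_2$-direction manipulation is handled symmetrically (swap $h_1\leftrightarrow h_2$, $c.\leftrightarrow .c$, and the roles of $k,l$), producing the second boundary sum of the statement. Adding the two identities, weighted by the appropriate transverse factor ($\tfrac{h_2}{2}$ respectively $\tfrac{h_1}{2}$ to recover the overall weight $\tfrac{h_1h_2}{4}$ of $[\cdot,\cdot]$), yields the claimed formula.

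The main obstacle I expect is bookkeeping of the two overlapping index blocks $[0,1]$–$[2N-1,2N-1]$ and $[1,1]$–$[2N,2N-1]$ used to define $(F,G_{c.})_c$: one has to check that reindexing the $U^h$-centred sum produces exactly these ranges with no double-counting at $k=1$ or $k=2N-1$, and that the offset by one between the forward and backward contributions is exactly what generates the \emph{averaged} boundary values $U^h_{0l}+U^h_{1l}$ rather than $U^h_{0l}$ alone. This averaging is the characteristic fingerprint of the central-difference choice in the definition of $\gradh$, and once it is tracked correctly in one direction, the rest of the proof is a direct transcription.
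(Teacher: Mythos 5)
Your proposal is correct and follows essentially the same route as the paper: the paper's proof likewise splits the central difference into its forward and backward parts, applies the one-dimensional Abel (summation-by-parts) identity to each for fixed transverse index (its relations (\ref{per-partes-1})--(\ref{per-partes-4})), recombines the two blocks into $\left( U^h, V^h_{c.} \right)_c$ and $\left( U^h, V^h_{.c} \right)_c$ with the averaged boundary terms $\left( \Uh_{0l}+\Uh_{1l}\right)\Vh_{0l}$ and $\left( \Uh_{2N-1,l}+\Uh_{2N,l}\right)\Vh_{2N,l}$, and then sums the two directional identities. The bookkeeping of the offset index ranges that you flag as the main obstacle is exactly the content of the paper's four per-partes relations.
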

\begin{proof}
It is easy to see that for fixed $k, l = 0, \cdots 2N$ the following relations holds.
\begin{eqnarray}
\label{per-partes-1}
\left[ \Uh_{b.}, V^h \right]_{1,l}^{2N-1,l} &=& - \left[ U, \Vh_{f.} \right]_{0,l}^{2N-1,l} + \frac{h_1}{2} \left( \Uh_{2N-1,l}\Vh_{2N,l} - \Uh_{0l}\Vh_{0l} \right),\\
\label{per-partes-2}
\left[ \Uh_{f.}, V^h \right]_{1,l}^{2N-1,l} &=& - \left[ U, \Vh_{b.} \right]_{1,l}^{2N,l}  + \frac{h_1}{2} \left( \Uh_{2N,l}\Vh_{2N,l} - \Uh_{1l}\Vh_{0l} \right), \\
\label{per-partes-3}
\left[ \Uh_{.b}, V^h \right]_{k,1}^{k,2N-1} &=& - \left[ U, \Vh_{.f} \right]_{k,0}^{k,2N-1} + \frac{h_2}{2} \left( \Uh_{k,2N-1}\Vh_{k,2N} - \Uh_{k0}\Vh_{k0} \right)\\
\label{per-partes-4}
\left[ \Uh_{.f}, V^h \right]_{k,1}^{k,2N-1} &=& - \left[ U, \Vh_{.b} \right]_{k,1}^{k,2N} + \frac{h_2}{2} \left( \Uh_{k,2N}\Vh_{k,2N} - \Uh_{k1}\Vh_{k0} \right)
\end{eqnarray}
Putting together (\ref{per-partes-1}) with (\ref{per-partes-2}) resp. (\ref{per-partes-3}) with (\ref{per-partes-4}) and summing over $l=1,\cdots 2N-1$ resp. over $k=1,\cdots 2N-1$ we obtain
\begin{eqnarray*}
\left( \Uh_{c.},V^h \right)_h &=& - \left( U, \Vh_{c.} \right)_c + \frac {h_1}{2} \sum_{l=1}^{2N-1} \left[  \left( \Uh_{2N-1,l}+\Uh_{2N,l} \right) \Vh_{2N,l} - \left( \Uh_{0l} + \Uh_{1l} \right) \Vh_{0l} \right],\\
\left( \Uh_{.c},V^h \right)_h &=& - \left( U, \Vh_{.c} \right)_c + \frac {h_2}{2} \sum_{k=1}^{2N-1} \left[  \left( \Uh_{k,2N-1}+\Uh_{k,2N} \right) \Vh_{k,2N} - \left( \Uh_{k0} + \Uh_{k1} \right) \Vh_{k0} \right],
\end{eqnarray*}
which gives
\begin{eqnarray*}
\left( \gradh U, V^h \right)_h &=& - \left( U, \gradh V \right)_c \\
&+&\frac {h_1}{2} \left( \sum_{l=1}^{2N-1} \left[  \left( \Uh_{2N-1,l}+\Uh_{2N,l} \right) \Vh_{2N,l} - \left( \Uh_{0l} + \Uh_{1l} \right) \Vh_{0l} \right]  \right) \\
&+& \frac{h_2}{2}\left( \sum_{k=1}^{2N-1}\left[  \left( \Uh_{k,2N-1}+\Uh_{k,2N} \right) \Vh_{k,2N} - \left( \Uh_{k0} + \Uh_{k1} \right) \Vh_{k0} \right] \right)
\end{eqnarray*}
\end{proof}

The following form of the previous Lemma \ref{lemma:discrete-green} will be more convenient for us:
\begin{lemma}
Let $p^h,u^h,v^h: \bar \omega_h \to {\mathbb R}$ and $v \mid_{\partial \omega} = 0$. Then
\begin{equation}
\label{zero_green}
\left( \gradh \cdot \left( p^h \gradh u^h \right), v^h \right)_h = - \left( p^h \gradh u^h, \gradh v^h \right)_c.
\end{equation}
\end{lemma}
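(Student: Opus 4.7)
The plan is to view (\ref{zero_green}) as a direct corollary of Lemma~\ref{lemma:discrete-green} applied componentwise to the discrete vector field $\boldsymbol{f}^h := p^h \gradh u^h$, combined with the observation that the homogeneous boundary condition on $v^h$ annihilates every boundary summation that appears.

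First I would split the left hand side by writing
$$
\left(\gradh \cdot \left(p^h \gradh u^h\right), v^h\right)_h = \left(\left(p^h \Uh_{c.}\right)_{c.}, V^h\right)_h + \left(\left(p^h \Uh_{.c}\right)_{.c}, V^h\right)_h,
$$
so that the identity decouples into two one-dimensional statements. To each piece I apply the elementary summation-by-parts identities (\ref{per-partes-1})--(\ref{per-partes-4}) from the proof of Lemma~\ref{lemma:discrete-green}, but with the scalar $\Uh$ there replaced by the corresponding component of $p^h \gradh u^h$ (on the doubled grid $\varpi_h$). Summing the two directional identities exactly reproduces the general Green formula of Lemma~\ref{lemma:discrete-green}, yielding
$$
\left(\gradh \cdot \left(p^h \gradh u^h\right), v^h\right)_h = -\left(p^h \gradh u^h, \gradh v^h\right)_c + B_1 + B_2,
$$
where $B_1$ and $B_2$ are the $x_1$- and $x_2$-boundary sums of Lemma~\ref{lemma:discrete-green}, each of them a linear functional of the boundary values $V^h_{0l}, V^h_{2N,l}, V^h_{k0}, V^h_{k,2N}$ of $V^h$.

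The concluding step is purely a check that $B_1 = B_2 = 0$. Every value $V^h_{kl}$ entering these sums sits either at a genuine boundary node of $\bar{\omega}_h$ (when $k$ or $l$ is even) or is the half-index average of two boundary nodes of $\bar{\omega}_h$ lying on the same boundary edge (when $k$ or $l$ is odd); in both cases the hypothesis $v^h\bigm|_{\partial \omega_h} = 0$ forces $V^h_{kl} = 0$. Hence $B_1 + B_2 = 0$ and (\ref{zero_green}) is established.

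The only real obstacle is bookkeeping: one must verify that the componentwise application of Lemma~\ref{lemma:discrete-green} to the vector $p^h \gradh u^h$ (living naturally on $\varpi_h$, not on $\bar{\omega}_h$) combines to the inner product $\left(\boldsymbol{F}, \gradh G\right)_c$ of the definitions, and that the half-index averages induced by the doubled-index convention (\ref{double-ind}) genuinely vanish on the boundary under the assumption $v^h\bigm|_{\partial \omega_h} = 0$. No additional analytic tool beyond the preceding lemma is required.
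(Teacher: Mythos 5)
Your proposal is correct and takes essentially the same route as the paper: the paper states this lemma without proof, presenting it as an immediate corollary of Lemma~\ref{lemma:discrete-green}, and your argument simply spells out that derivation (componentwise application of the discrete Green formula to $p^h\gradh u^h$ plus the observation that the boundary sums vanish because every $V^h$ value appearing there is either a boundary node value or an average of boundary node values of $v^h$).
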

\begin{theorem}
\label{energy_eq}
For solution of (\ref{discrete-willmore})--(\ref{discrete-willmore-w}) such that $u^h \mid_{\partial \omega_h} = 0$ and $w^h_\gamma=0 \mid_{\partial \omega_h}$ the following energy equality holds
$$\left( \left(u^h_t \right)^2, \frac{1}{Q^h} \right)_h + 
\frac{d}{dt}
     \left( \left(H^h_{\gamma}\right)^2, Q^h \right)_h = 0.$$
\end{theorem}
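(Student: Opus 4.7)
The plan is to mimic exactly the structure of the continuous proof of Theorem \ref{awg-energy-equality}, replacing integration by parts by the discrete Green formula from Lemma \ref{zero_green} and using the symmetry of $\mathbb{E}^h_\gamma$ (which is inherited from the fact that $\mathbb{E}_\gamma$ is a Hessian of $\gamma$). First I would rewrite (\ref{discrete-willmore-w}) in a discrete weak form by testing $H^h_\gamma = \nabla^h \cdot \nabla_{\bfvec p}\gamma(\nabla^h u^h,-1)$ against an arbitrary grid function $\xi^h$ vanishing on $\partial\omega_h$ and applying Lemma \ref{zero_green}, which gives
\begin{equation*}
\left( \frac{w^h_\gamma}{Q^h}, \xi^h \right)_h \;=\; -\bigl( \nabla_{\bfvec p}\gamma(\nabla^h u^h,-1), \nabla^h \xi^h \bigr)_c.
\end{equation*}
Differentiating this identity in $t$ (with $\xi^h$ independent of $t$) and using the chain rule for $\nabla_{\bfvec p}\gamma$ (which produces $\mathbb{E}^h_\gamma \nabla^h u^h_t$ exactly as in the continuous computation below (\ref{awg-aux-9})) I obtain the discrete analogue of (\ref{awg-aux-9}).

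Next, I take the scheme (\ref{discrete-willmore}) itself, divide by $Q^h_{ij}$, and pair both sides with $u^h_t$ in the $(\cdot,\cdot)_h$ product. Since $u^h \mid_{\partial\omega_h}=0$ implies $u^h_t \mid_{\partial\omega_h}=0$, Lemma \ref{zero_green} applies to each of the two terms inside the discrete divergence and yields
\begin{equation*}
\left( \frac{(u^h_t)^2}{Q^h}, 1 \right)_h \;=\; \bigl( \mathbb{E}^h_\gamma \nabla^h w^h_\gamma, \nabla^h u^h_t \bigr)_c \;-\; \left( \tfrac12 \tfrac{(w^h_\gamma)^2}{(Q^h)^3} \nabla^h u^h, \nabla^h u^h_t \right)_c,
\end{equation*}
which is the discrete counterpart of (\ref{wg-aux-6}). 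Then, testing the differentiated $w^h_\gamma$-equation with $\xi^h = w^h_\gamma$ (permissible since $w^h_\gamma \mid_{\partial\omega_h}=0$) and using the pointwise symmetry $\mathbb{E}^h_\gamma = (\mathbb{E}^h_\gamma)^T$ to identify $(\mathbb{E}^h_\gamma \nabla^h w^h_\gamma, \nabla^h u^h_t)_c = (\mathbb{E}^h_\gamma \nabla^h u^h_t, \nabla^h w^h_\gamma)_c$, I add the two relations exactly as in (\ref{wg-aux-6})--(\ref{wg-aux-7}).

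The cross term involving $\mathbb{E}^h_\gamma$ cancels, leaving
\begin{equation*}
\left( \frac{(u^h_t)^2}{Q^h}, 1 \right)_h + \left( \frac{w^h_\gamma w^h_{\gamma,t}}{Q^h}, 1 \right)_h - \left( \frac{(w^h_\gamma)^2 Q^h_t}{(Q^h)^2}, 1 \right)_h + \left( \tfrac12 \tfrac{(w^h_\gamma)^2}{(Q^h)^3} \nabla^h u^h, \nabla^h u^h_t \right)_c = 0.
\end{equation*}
To close the argument I need the discrete identity $\nabla^h u^h \cdot \nabla^h u^h_t = Q^h Q^h_t$. This follows by differentiating $(Q^h_{i\pm 1/2,j\pm 1/2})^2 = 1 + |\nabla^h u^h_{i\pm 1/2,j\pm 1/2}|^2$ at each staggered node and averaging, provided the factor $\frac{(w^h_\gamma)^2}{(Q^h)^3}$ is interpreted consistently (at staggered points in the last $(\cdot,\cdot)_c$ term). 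After substitution the last two terms collapse into $-\tfrac12 ((w^h_\gamma)^2 Q^h_t/(Q^h)^2, 1)_h$, and using $\tfrac12 \tfrac{d}{dt}((H^h_\gamma)^2 Q^h) = \tfrac12 \tfrac{d}{dt}((w^h_\gamma)^2/Q^h) = w^h_\gamma w^h_{\gamma,t}/Q^h - \tfrac12 (w^h_\gamma)^2 Q^h_t/(Q^h)^2$ pointwise yields the claimed equality.

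The main obstacle I anticipate is the bookkeeping between primal and staggered nodes: the scheme defines $Q^h$, $w^h_\gamma$, and $\mathbb{E}^h_\gamma$ both at $(i,j)$ and at $(i\pm\tfrac12,j\pm\tfrac12)$, and the pairing $(\cdot,\cdot)_c$ only sees the staggered values. Establishing the chain-rule identity $\frac{d}{dt}\nabla_{\bfvec p}\gamma(\nabla^h u^h,-1) = \mathbb{E}^h_\gamma \nabla^h u^h_t$ and the identity $\nabla^h u^h \cdot \nabla^h u^h_t = Q^h Q^h_t$ precisely at the staggered grid, and making the Green formula absorb the two divergence terms with the non-constant coefficient $\frac{(w^h_\gamma)^2}{(Q^h)^3}$, is where all genuine discrete work resides; the rest is an algebraic transcription of the continuous proof.
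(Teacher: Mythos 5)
Your proposal follows essentially the same route as the paper's own proof: test the $w^h_\gamma$-equation, differentiate in time via the chain rule to produce $\mathbb{E}^h_\gamma \gradh u^h_t$, test the evolution equation with $u^h_t$, add the two identities so the $\mathbb{E}^h_\gamma$ cross terms cancel, and close with the discrete identity $\gradh u^h \cdot \gradh u^h_t = Q^h Q^h_t$. The one concrete item you flag only implicitly is made explicit in the paper: converting the staggered pairing $\bigl( (w^h_\gamma)^2/(Q^h)^2, Q^h_t \bigr)_c$ back to the primal product $(\cdot,\cdot)_h$ leaves a boundary sum $S_h$, which vanishes precisely because of the zero Dirichlet condition on $w^h_\gamma$.
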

\begin{proof}
We start with the equation for $w^h_{\gamma,ij}$ (\ref{discrete-willmore-w}), divide by $Q^h_{ij}$, multiply by $\xi_{ij}$ vanishing on $\partial \omega_h$ and sum over $\omega$.
$$
\left( \frac{w^h_\gamma}{Q^h}, \xi \right)_h = \left( \gradh \cdot \left( \gamma_{p} \left( \gradh u^h, -1 \right) \right), \xi \right)_h.
$$
The Green theorem (\ref{zero_green}) gives
\begin{equation}
\label{w_xi_eq}
\left( \frac{w^h_\gamma}{Q^h}, \xi \right)_h = -\left( \gradh \xi, \nabla_p \left( \gradh u^h, -1 \right) \right).
\end{equation}
Taking the right hand side of (\ref{discrete-willmore}), multiplying by a test function $\varphi$ vanishing at $\partial \omega_h$ and applying the Green theorem (\ref{zero_green}) we obtain
\begin{eqnarray}
\label{u^h_t_phi_equation}
\left( - \gradh \cdot \left(
   2 \mathbb{E}^h \gradh w^h_\gamma - 
   \frac{\left(w^h_\gamma\right)^2}{\left(Q^h\right)^3} \gradh u^h \right), \varphi \right)_h = 
   \left(  
   2 \mathbb{E}^h \gradh w^h_\gamma - \frac{\left(w^h_\gamma\right)^2}{\left(Q^h\right)^3} \gradh u^h,
   \gradh \varphi \right)_c. \nonumber \\
\end{eqnarray}
Differentiating (\ref{w_xi_eq}) with respect to $t$ we obtain
\begin{eqnarray*}
&&\frac{d}{dt}\left( \frac{w^h_\gamma}{Q^h}, \xi \right)_h + 
\frac{d}{dt}\left( \nabla_{p_i} \gamma \left( \gradh u^h, -1 \right), \gradh \xi  \right)_c = \\
&&\frac{d}{dt} \left( \frac{w^h_\gamma}{Q^h}, \xi \right)_h + 
\left( \left( \nabla_p \otimes \nabla_p \right) \gamma \left( \gradh u^h, -1 \right) \gradh u^h_t, \gradh \xi  \right)_c \\
&=& \left( \frac{w^h_{\gamma,t}}{Q^h}, \xi \right)_h - \left( \frac{Q^h_t \cdot w^h_\gamma}{\left(Q^h\right)^2}, \xi \right)_h +
\left( \mathbb{E}^h \gradh u^h_t, \gradh \xi  \right)_c = 0.
\end{eqnarray*}
After substituting $\xi = w^h_\gamma$ we obtain
\begin{equation}
\label{w_equation}
\left( \frac{w^h_{\gamma,t}}{Q^h}, w^h_\gamma \right)_h - \left( \frac{Q^h_t}{\left(Q^h\right)^2}, \left(w^h_\gamma\right)^2 \right)_h + 
\left( \mathbb{E}^h \gradh u^h_t, \gradh w^h_\gamma \right)_c = 0,
\end{equation}
and a substitution $\varphi = u^h_t$ in (\ref{u^h_t_phi_equation}) gives
\begin{equation}
\label{u^h_t_equation}
\left( \left(u^h_t\right)^2, \frac{1}{Q^h} \right)_h  - 
\left( 2\mathbb{E}^h \gradh w^h_\gamma - \frac{\left(w^h_\gamma\right)^2}{\left(Q^h\right)^3} \gradh u^h, \gradh u^h_t \right)_c
 = 0.
\end{equation}
Substituting (\ref{w_equation}) to (\ref{u^h_t_equation}) (term $\mathbb{E}^h \gradh w^h$) we have
\begin{eqnarray}
\left( \left(u^h_t\right)^2, \frac{1}{Q^h} \right)_h + 
\left( \frac{w^h_{\gamma,t}}{Q^h}, w^h_\gamma \right)_h -
         \left( \frac{Q^h_t}{\left(Q^h \right)^2}, \left(w^h_\gamma\right)^2 \right)_h + \nonumber \\ 
   \left( \frac{\left(w^h_\gamma\right)^2}{\left(Q^h\right)^3}, \gradh u^h \cdot \gradh u^h_t \right)_c
   = 0.
\end{eqnarray}
We remind that $\gradh u^h \cdot \gradh u^h_t = Q^h \cdot Q^h_t$ which gives
\begin{eqnarray}
\left( \left(u^h_t\right)^2, \frac{1}{Q^h} \right)_h +
   \left( \frac{w^h_{\gamma,t}}{Q^h}, w^h_\gamma \right)_h -   
   \left( \frac{Q^h_t}{\left(Q^h\right)^2}, \left(w^h_\gamma\right)^2 \right)_h + 
   \left( \frac{\left(w^h_\gamma\right)^2}{\left(Q^h\right)^2}, Q^h_t \right)_c 
   = 0.\nonumber \\
\end{eqnarray}
It is equivalent to
\begin{equation}
\label{pre_stab_eq}
\left( \left(u^h_t\right)^2, \frac{1}{Q^h} \right)_h +
\left( \frac{w^h_{\gamma,t}}{\left(Q^h\right)^2}, w^h_\gamma \right)_h -
\left( \frac{Q^h_t}{\left(Q^h\right)^2}, \left(w^h_\gamma\right)^2 \right)_h + S_h = 0,
\end{equation}
for
\begin{eqnarray*}
S_h&=&
\frac{h_1^2}{4} \sum_{i=1}^{2N - 1} \left[ \left( \frac{w^h_{\gamma,i,2N}}{Q^h_{i,2N}} \right)^2 \cdot  Q^h_{t,i,2N} +
                           \left( \frac{w^h_{\gamma,i0}}{Q^h_{i0}} \right)^2 \cdot  Q^h_{t,i0} \right] + \\
&&\frac{h_2^2}{4} \sum_{j=1}^{2N - 1} \left[ \left( \frac{w^h_{\gamma,2Nj}}{Q^h_{2Nj}} \right)^2 \cdot Q^h_{t,2Nj} +
                           \left( \frac{w^h_{\gamma,0j}}{Q^h_{0j}} \right)^2 \cdot Q^h_{t,0j} \right].
\end{eqnarray*}
Finally from (\ref{pre_stab_eq}) we have
$$
\left( \left(u^h_t\right)^2, \frac{1}{Q^h} \right)_h +
\frac{d}{dt}\left( \left(H_{\gamma}^h\right)^2, Q^h \right)_h + S_h = 0.
$$
The term $S_h$ is vanishing on $\partial \omega$ because of the zero Dirichlet boundary conditions.
\end{proof}

\section{Time discretization}

The semi-discrete problem (\ref{def:semidiscrete-aniso-willmore-ut}) is discretize in time by the method of lines in the same way as in \cite{OberhuberSuzukiZabka-2011}. Here, the resulting system of ODEs is solved by the Runge-Kutta-Merson solver with adaptive integration time step implemented in CUDA to run on GPU.

\section{Experimental order of convergence}                         
To our best knowledge, there is not known any analytical solution for the Willmore flow of graphs even in the isotropic case. To be able to measure the experimental order of convergence we will solve a modified problem with additional forcing term $F$ having the following form:

\begin{eqnarray}
\label{anisotropic-willmore-graphs-ut-with-f}
\partial_t u &=&-Q\Div{ \mathbbm{E}_\gamma \grad w_\gamma - \frac{1}{2}\frac{w^2_\gamma}{Q^3} \grad u } + F\left(u\right)\ {\rm on}\ \left(0,T\right) \times \Omega,\\
\label{anisotropic-willmore-graphs-w-with-f}
w_\gamma &=& QH_\gamma \quad {\rm on}\ \left(0,T\right) \times \Omega,\\
\label{anisotropic-willmore-graphs-ini-with-f}
u \mid_{t=0} &=& u_0 \ {\rm on}\ \Omega, \\
\label{aniso-willmore-graphs-dirichlet-bc-with-f}
u &=& g,\ w_\gamma = 0  \quad {\rm on}\ \partial \Omega.
\end{eqnarray}
Having a function $\zeta\left(\bfvec x, t\right)$, we set
\begin{eqnarray*}
F_{W}\left(\zeta\right) &=&  Q\left(\zeta\right)\Div{ \frac{1}{Q\left(\zeta\right)} \mathbbm{P}\left(\zeta\right) \grad w_\gamma\left(\zeta\right) - \frac{1}{2}\frac{w^2_\gamma\left(\zeta\right)}{Q^3\left(\zeta\right)} \grad \zeta } + \Partial{\zeta}{t}\ {\rm on\ } \Omega \times \left[0,T\right], \\
w_\gamma\left(\zeta\right) &=& Q\left(\zeta\right) H_\gamma\left(\zeta\right)\ {\rm on\ } \Omega \times \left[0,T\right].
\end{eqnarray*}
As an analytical solution $\zeta(\bfvec x, t)$ of (\ref{anisotropic-willmore-graphs-ut-with-f})--(\ref{aniso-willmore-graphs-dirichlet-bc-with-f}) we chose the following function
\begin{equation}
\label{def:eoc-function-zeta}
\zeta\left(x,y,t\right) := \cos \left(\pi t\right) \frac{1}{r^{2n}} \left(x^n - r^n\right) \left(y^n - r^n\right) \exp\left( - \sigma \left(x^2 + y^2\right)\right)\ {\rm on}\ \Omega \times [0,T],
\end{equation}
for $\Omega \equiv [-r,r]^2$. For given $T$, we evaluate the errors in the norms of the spaces $L_1\left(\Omega;\left[0,T\right]\right)$, $L_2\left(\Omega;\left[0,T\right]\right)$ and $L_\infty\left(\Omega;\left[0,T\right]\right)$ resp. their approximations
\begin{eqnarray}
\label{eqn:l1-norm-eoc}
\left\| \uh - \mathcal{P}_h \left( \zeta \right) \right\|_{L_1\left(\omega_h;\left[0,T\right]\right)}^{h,\tau} &:=&
\sum_{i, j, k =0}^{N_1,N_2, M} \tau \left| \uh_{ij} \left(k\tau\right) - \zeta\left(-r+ih,-r+jh,k\tau\right) \right| h^2, \\
\left\| \uh  - \mathcal{P}_h \left( \zeta \right) \right\|_{L_2\left(\omega_h;\left[0,T\right]\right)}^{h,\tau} &:=&
\left( \sum_{i,j, k=0}^{N_1,N_2,M} \tau \left( \uh_{ij}\left(k\tau\right)  - \zeta\left(-r+ih,-r+jh,k\tau\right) \right)^2 h^2 \right)^{\frac{1}{2}},\nonumber \\ \\
\label{eqn:max-norm-eoc}
\left\| \uh  - \mathcal{P}_h \left( \zeta \right) \right\|_{L_\infty\left(\omega_h;\left[0,T\right]\right)}^{h,\tau} &:=&
\max_{\substack{i=0, \cdots,N_1 \\j=0, \cdots, N_2 \\k=0,\cdots, M }} \left| \uh_{ij}\left(k\tau\right)  - \zeta\left(-r+ih,-r+jh,k\tau\right) \right|,
\end{eqnarray}
for $\tau = T/M$. We would like to emphasize that $\tau$ does not correspond with the time step of the solver. The experimental order of convergence is evaluated as follows - for two approximations $u^{h_1}$ and $u^{h_2}$ obtained by the discretization with the space steps $h_1$ and $h_2$ we compute the approximation errors $Err_{h_1}$ and $Err_{h_2}$ in one norm of (\ref{eqn:l1-norm-eoc})--(\ref{eqn:max-norm-eoc}) and then we set
\begin{equation}
\label{eqn:eoc-def}
EOC\left(Err_{h_1},Err_{h_2}\right) := \frac{\log\left(Err_{h_1}/Err_{h_2}\right)}{\log\left(h_1/h_2\right)}.
\end{equation}
The results are presented in the Tables \ref{eoc-tab-1}--\ref{eoc-tab-8}. The solution of (\ref{anisotropic-willmore-graphs-ut-with-f})--(\ref{aniso-willmore-graphs-dirichlet-bc-with-f}) was approximated on the domain $\Omega \equiv \left[-4,4\right]^2$ and the time interval $\left[0,0.1\right]$. The Tables \ref{eoc-tab-1} -- \ref{eoc-tab-2} show EOC for $u$ and $w$ with the isotropic setting. Both unknown functions are approximated with the second order. The Tables \ref{eoc-tab-3} -- \ref{eoc-tab-4} show anisotropy (\ref{def:quadratic-form-anisotropy}) where 
\begin{equation}
\label{def:aniso-G-2-0-0-1}
\mathbbm{G}=\left( 
\begin{array}{cc}
2 & 0 \\
0 & 1
\end{array}
\right).
\end{equation}
Even in this case we obtained EOC equals to 2. As we can see in  the Tables \ref{eoc-tab-5} -- \ref{eoc-tab-6}, if we set
\begin{equation}
\label{def:aniso-G-2-1-1-1}
\mathbbm{G}=\left( 
\begin{array}{cc}
2 & 1 \\
1 & 1
\end{array}
\right),
\end{equation}
mixed partial derivatives are involved and EOC drops drops to 1 for both $u$ and $w_\gamma$. The last test is with the anisotropy given by (\ref{def:gamma2}) and we set $\epsilon_{abs}=0.1$. Results are presented in the Tables \ref{eoc-tab-7} and \ref{eoc-tab-8}. The non-linearity is much stronger here and it is not clear whether EOC is 1 or 2 from the Tables \ref{eoc-tab-7} and \ref{eoc-tab-8}. It would require computation on finer meshes which we were not able to do because of very long time needed for such simulation.
%
%
%
%
%
\begin{table}
\begin{center}
\begin{tabular}{|r|l|l|l|l|l|l|l|}\hline
\raisebox{-2ex}[0ex]{Meshes} &
\raisebox{-2ex}[0ex]{$h$} &
\multicolumn{2}{|c|}{ ${L_1\left( \Omega \right)}$}  &
\multicolumn{2}{|c|}{ ${L_2\left( \Omega \right)}$}  &
\multicolumn{2}{|c|}{ ${L_\infty\left( \Omega \right)}$}   \\
\cline{3-8}
         &           & Err.   & EOC        &  Err.  &  EOC        &  Err.  & EOC         \\  
\hline          
16       &  0.5	     & 0.09             &             &	0.2              &     	      &	1.05             &                \\
32       &  0.25     & \real{4.37}{-3}  & \raisebox{1ex}{\bf  4.5}  & \real{4.39}{-3}  &   \raisebox{1ex}{\bf 5.52} &	\real{2.82}{-2}  & \raisebox{1ex}{\bf 5.21}     \\
64       &  0.125    & \real{9.96}{-4}  & \raisebox{1ex}{\bf  2.13} &	\real{7.55}{-4}  & \raisebox{1ex}{\bf 2.54} &	\real{1.68}{-3}  & \raisebox{1ex}{\bf 4.06}     \\
128      &  0.0625   & \real{2.49}{-4}  & \raisebox{1ex}{\bf  1.99} &	\real{1.89}{-4}  & \raisebox{1ex}{\bf 1.99} &	\real{4.22}{-4}  & \raisebox{1ex}{\bf 1.99}     \\
256      &  0.03125  & \real{6.25}{-5}  & \raisebox{1ex}{\bf  1.99} &	\real{4.74}{-5}  & \raisebox{1ex}{\bf 1.00} &	\real{1.05}{-4}  & \raisebox{1ex}{\bf 1.99}     \\
\hline                                                                   
\end{tabular}                                                            
\end{center}
\caption{EOC of the approximation of the function $u$ with the anisotropy defined by (\ref{gamma-iso-gr}).}
\label{eoc-tab-1}
\end{table}
\begin{table}
\begin{center}
\begin{tabular}{|r|l|l|l|l|l|l|l|}\hline
\raisebox{-2ex}[0ex]{Meshes} &
\raisebox{-2ex}[0ex]{$h$} &
\multicolumn{2}{|c|}{ ${L_1\left( \Omega \right)}$}  &
\multicolumn{2}{|c|}{ ${L_2\left( \Omega \right)}$}  &
\multicolumn{2}{|c|}{ ${L_\infty\left( \Omega \right)}$}   \\
\cline{3-8}
         &           & Err.   & EOC        &  Err.    &  EOC        &  Err.    & EOC       \\  
\hline         
16       &  0.5	     & 0.48             &	     &  0.83            &	     & 3.65            &              \\
32       &  0.25     & \real{3.19}{-2}  & \raisebox{1ex}{\bf 3.92} & \real{4.89}{-2}  & \raisebox{1ex}{\bf 4.09} & 0.53            & \raisebox{1ex}{\bf 2.77}   \\
64       &  0.125    & \real{7.02}{-3}  & \raisebox{1ex}{\bf 2.18} & \real{7.79}{-3}  & \raisebox{1ex}{\bf 2.65} & \real{8.81}{-2} & \raisebox{1ex}{\bf 2.59}   \\
128      &  0.0625   & \real{1.76}{-3}  & \raisebox{1ex}{\bf 1.99} & \real{1.91}{-3}  & \raisebox{1ex}{\bf 2.02} & \real{2.1}{-2}  & \raisebox{1ex}{\bf 2.06}   \\
256      &  0.03125  & \real{4.40}{-4}  & \raisebox{1ex}{\bf 2}    & \real{4.75}{-4}  & \raisebox{1ex}{\bf 2}    & \real{5.2}{-3}  & \raisebox{1ex}{\bf 2.01}   \\
\hline                                                                   
\end{tabular}                                                            
\end{center}
\caption{EOC of the approximation of the function $w_\gamma$ with the anisotropy defined by (\ref{gamma-iso-gr}).}
\label{eoc-tab-2}
\end{table}
%
%
%
%
\begin{table}
\begin{center}
\begin{tabular}{|r|l|l|l|l|l|l|l|}\hline
\raisebox{-2ex}[0ex]{Meshes} &
\raisebox{-2ex}[0ex]{$h$} &
\multicolumn{2}{|c|}{ ${L_1\left( \Omega \right)}$}  &
\multicolumn{2}{|c|}{ ${L_2\left( \Omega \right)}$}  &
\multicolumn{2}{|c|}{ ${L_\infty\left( \Omega \right)}$}   \\
\cline{3-8}
         &           & Err.    & EOC                      &  Err.   &  EOC                      &  Err.  & EOC         \\  
\hline          
16       &  0.5	     &  0.7    &                          &  1.2    &	                        &  5.5   &             \\
32       &  0.25     &  0.53   & \raisebox{1ex}{\bf 0.4}  &  0.93   & \raisebox{1ex}{\bf  0.37} &  4.9   & \raisebox{1ex}{\bf 0.17}  \\
64       &  0.125    &  0.085  & \raisebox{1ex}{\bf 2.6}  &  0.24   & \raisebox{1ex}{\bf  2}    &  1.5   & \raisebox{1ex}{\bf 1.8}     \\
128      &  0.0625   &  0.0059 & \raisebox{1ex}{\bf 3.8}  &  0.013  & \raisebox{1ex}{\bf  4.2}  &  0.089 & \raisebox{1ex}{\bf 4}   \\
256      &  0.03125  &  0.0015 & \raisebox{1ex}{\bf 2}    &  0.0035 & \raisebox{1ex}{\bf  1.9}  &  0.024 & \raisebox{1ex}{\bf 1.9}   \\
\hline                                                                   
\end{tabular}                                                            
\end{center}
\caption{EOC of the approximation of the function $u$ with the anisotropy defined by (\ref{def:quadratic-form-anisotropy}) and (\ref{def:aniso-G-2-0-0-1}).}
\label{eoc-tab-3}
\end{table}
\begin{table}
\begin{center}
\begin{tabular}{|r|l|l|l|l|l|l|l|}\hline
\raisebox{-2ex}[0ex]{Meshes} &
\raisebox{-2ex}[0ex]{$h$} &
\multicolumn{2}{|c|}{ ${L_1\left( \Omega \right)}$}  &
\multicolumn{2}{|c|}{ ${L_2\left( \Omega \right)}$}  &
\multicolumn{2}{|c|}{ ${L_\infty\left( \Omega \right)}$}   \\
\cline{3-8}
         &           & Err.   & EOC                      &  Err.    &  EOC                      &  Err.        & EOC                           \\  
\hline         
16       &  0.5	     & 3.5    &	                         &    7.3   &     	                &	41     &                               \\
32       &  0.25     & 3.2    &	\raisebox{1ex}{\bf 0.15} &    7.3   & \raisebox{1ex}{\bf  0}    &	44     & \raisebox{1ex}{\bf -0.1}      \\
64       &  0.125    & 0.58   &	\raisebox{1ex}{\bf  2.4} &    1.6   & \raisebox{1ex}{\bf  2.1}  &	19     & \raisebox{1ex}{\bf  1.2}      \\
128      &  0.0625   & 0.04   &	\raisebox{1ex}{\bf  3.9} &    0.085 & \raisebox{1ex}{\bf  4.3}  &	0.5    & \raisebox{1ex}{\bf  5.2}      \\
256      &  0.03125  & 0.01   &	\raisebox{1ex}{\bf  1.9} &    0.023 & \raisebox{1ex}{\bf  1.9}  &	0.14   & \raisebox{1ex}{\bf  1.9}      \\
\hline                                                                   
\end{tabular}                                                            
\end{center}
\caption{EOC of the approximation of the function $w_\gamma$ with the anisotropy defined by (\ref{def:quadratic-form-anisotropy}) and (\ref{def:aniso-G-2-0-0-1}).}
\label{eoc-tab-4}
\end{table}
%
%
%
%
\begin{table}
\begin{center}
\begin{tabular}{|r|l|l|l|l|l|l|l|}\hline
\raisebox{-2ex}[0ex]{Meshes} &
\raisebox{-2ex}[0ex]{$h$} &
\multicolumn{2}{|c|}{ ${L_1\left( \Omega \right)}$}  &
\multicolumn{2}{|c|}{ ${L_2\left( \Omega \right)}$}  &
\multicolumn{2}{|c|}{ ${L_\infty\left( \Omega \right)}$}   \\
\cline{3-8}
         &           & Err.   & EOC                      &  Err.    &  EOC                      &  Err.   & EOC                             \\  
\hline          
16       &  0.5	     & 2.1    &	                         &   4      &     	                &   19     &                                \\
32       &  0.25     & 0.46   &	\raisebox{1ex}{\bf  2.2} &   0.64   & \raisebox{1ex}{\bf 2.6}   &    2.9   & \raisebox{1ex}{\bf  2.7}       \\
64       &  0.125    & 0.15   &	\raisebox{1ex}{\bf  1.6} &   0.34   & \raisebox{1ex}{\bf 0.92}  &    1.9   & \raisebox{1ex}{\bf  0.59}      \\
128      &  0.0625   & 0.06   &	\raisebox{1ex}{\bf  1.3} &   0.16   & \raisebox{1ex}{\bf 1.1}   &    1.1   & \raisebox{1ex}{\bf  0.74}      \\
256      &  0.03125  & 0.025  &	\raisebox{1ex}{\bf  1.2} &   0.08   & \raisebox{1ex}{\bf 0.99}  &    0.72  & \raisebox{1ex}{\bf  0.66}      \\
\hline                                                                   
\end{tabular}                                                            
\end{center}
\caption{EOC of the approximation of the function $u$ with the anisotropy defined by (\ref{def:quadratic-form-anisotropy}) and (\ref{def:aniso-G-2-1-1-1}).}
\label{eoc-tab-5}
\end{table}
\begin{table}
\begin{center}
\begin{tabular}{|r|l|l|l|l|l|l|l|}\hline
\raisebox{-2ex}[0ex]{Meshes} &
\raisebox{-2ex}[0ex]{$h$} &
\multicolumn{2}{|c|}{ ${L_1\left( \Omega \right)}$}  &
\multicolumn{2}{|c|}{ ${L_2\left( \Omega \right)}$}  &
\multicolumn{2}{|c|}{ ${L_\infty\left( \Omega \right)}$}   \\
\cline{3-8}
         &           & Err.   & EOC                      &  Err.    &  EOC                      &  Err.    & EOC                        \\  
\hline         
16       &  0.5	     & 12    &	                         &  27      &	                         & 120   &                              \\
32       &  0.25     & 3.3   &	\raisebox{1ex}{\bf 1.9}  &   5      & \raisebox{1ex}{\bf  2.4}   &  30   & \raisebox{1ex}{\bf 2}        \\
64       &  0.125    & 1.2   &	\raisebox{1ex}{\bf 1.5}  &   2.7    & \raisebox{1ex}{\bf  0.9}   &  25   & \raisebox{1ex}{\bf 0.26}     \\
128      &  0.0625   & 0.53  &	\raisebox{1ex}{\bf 1.1}  &   1.3    & \raisebox{1ex}{\bf  0.99}  & 13    & \raisebox{1ex}{\bf 0.92}     \\
256      &  0.03125  & 0.25  &	\raisebox{1ex}{\bf 1.1}  &   0.76   & \raisebox{1ex}{\bf  0.82}  &  8.3  & \raisebox{1ex}{\bf 0.68}     \\
\hline                                                                   
\end{tabular}                                                            
\end{center}
\caption{EOC of the approximation of the function $w_\gamma$ with the anisotropy defined by (\ref{def:quadratic-form-anisotropy}) and (\ref{def:aniso-G-2-1-1-1}).}
\label{eoc-tab-6}
\end{table}
%
%
%
%
\begin{table}
\begin{center}
\begin{tabular}{|r|l|l|l|l|l|l|l|}\hline
\raisebox{-2ex}[0ex]{Meshes} &
\raisebox{-2ex}[0ex]{$h$} &
\multicolumn{2}{|c|}{ ${L_1\left( \Omega \right)}$}  &
\multicolumn{2}{|c|}{ ${L_2\left( \Omega \right)}$}  &
\multicolumn{2}{|c|}{ ${L_\infty\left( \Omega \right)}$}   \\
\cline{3-8}
         &           & Err.   & EOC        &  Err.  &  EOC        &  Err.  & EOC         \\  
\hline          
16       &  0.5	     &  2.9   &	                         &  3.2   &	                      & 9     &             \\
32       &  0.25     &  1.8   &	\raisebox{1ex}{\bf 0.72} &  1.7   & \raisebox{1ex}{\bf 0.9}   & 4.4   & \raisebox{1ex}{\bf 1}   \\
64       &  0.125    &  0.46  &	\raisebox{1ex}{\bf 1.9}  &  1.5   & \raisebox{1ex}{\bf 0.17}  & 15    & \raisebox{1ex}{\bf -1.8}  \\
128      &  0.0625   &  0.078 &	\raisebox{1ex}{\bf 2.6}  &  0.11  & \raisebox{1ex}{\bf 3.7}   & 0.55  & \raisebox{1ex}{\bf 4.8}   \\
256      &  0.03125  &  0.027 &	\raisebox{1ex}{\bf 1.47} &  0.045 & \raisebox{1ex}{\bf 1.28 } & 0.18  & \raisebox{1ex}{\bf 1.6}   \\
\hline                                                                   
\end{tabular}                                                            
\end{center}
\caption{EOC of the approximation of the function $u$ with the anisotropy defined by (\ref{def:gamma2}) and $\epsilon_{abs}=1$.}
\label{eoc-tab-7}
\end{table}
\begin{table}
\begin{center}
\begin{tabular}{|r|l|l|l|l|l|l|l|}\hline
\raisebox{-2ex}[0ex]{Meshes} &
\raisebox{-2ex}[0ex]{$h$} &
\multicolumn{2}{|c|}{ ${L_1\left( \Omega \right)}$}  &
\multicolumn{2}{|c|}{ ${L_2\left( \Omega \right)}$}  &
\multicolumn{2}{|c|}{ ${L_\infty\left( \Omega \right)}$}   \\
\cline{3-8}
         &           & Err.   & EOC                        &  Err.    &  EOC                      &  Err.       & EOC       \\  
\hline         
16       &  0.5	     & 14     &	                           &   14     &                           &	41     &             \\
32       &  0.25     &  8.6   &	\raisebox{1ex}{\bf 0.69}   &   8.3    & \raisebox{1ex}{\bf 0.74}  &	29     & \raisebox{1ex}{\bf 0.49}      \\
64       &  0.125    &  9     &	\raisebox{1ex}{\bf -0.067} &   36     & \raisebox{1ex}{\bf -2.1}  &	460    & \raisebox{1ex}{\bf -4}      \\
128      &  0.0625   &  0.69  &	\raisebox{1ex}{\bf 3.7}    &  1.1     & \raisebox{1ex}{\bf 5}     &	5.3    & \raisebox{1ex}{\bf 6.4}      \\
256      &  0.03125  &  0.29  &	\raisebox{1ex}{\bf 1.25}   &  0.49    & \raisebox{1ex}{\bf 1.16}  &	1.49   & \raisebox{1ex}{\bf 1.8}      \\
\hline                                                                   
\end{tabular}                                                            
\end{center}
\caption{EOC of the approximation of the function $w_\gamma$ with the anisotropy defined by (\ref{def:gamma2}) and $\epsilon_{abs}=1$.}
\label{eoc-tab-8}
\end{table}

\section{Numerical experiments}
On the Figures \ref{figure-1}--\ref{figure-3} we show results of qualitative analysis. The initial condition is $u_0 =  \sin\left( 3 \pi \sqrt{ x^2 + y^2} \right) \quad \rm{on}\ \Omega \equiv \left[-2,2\right]^2$ and we set  the Neumann boundary conditions $\Partial{\varphi}{\nu} = \mathbbm{E}_\gamma \grad w_\gamma \cdot \nu = 0  \quad \rm{on}\ \partial \Omega$. The computational domain is covered by $100 \times 100$ meshes. The Figure \ref{figure-1} depicts result obtained with the anisotropy given by (\ref{def:quadratic-form-anisotropy}) and
\begin{equation}
\label{def:aniso-G-8-0-0-1}
\mathbbm{G} := \left( 
\begin{array}{cc}
8 & 0 \\
0 & 1
\end{array}
\right)
\end{equation}
on the time interval $\left[0,0.001\right]$ .
\begin{figure}
\center{
\includegraphics[width=6cm]{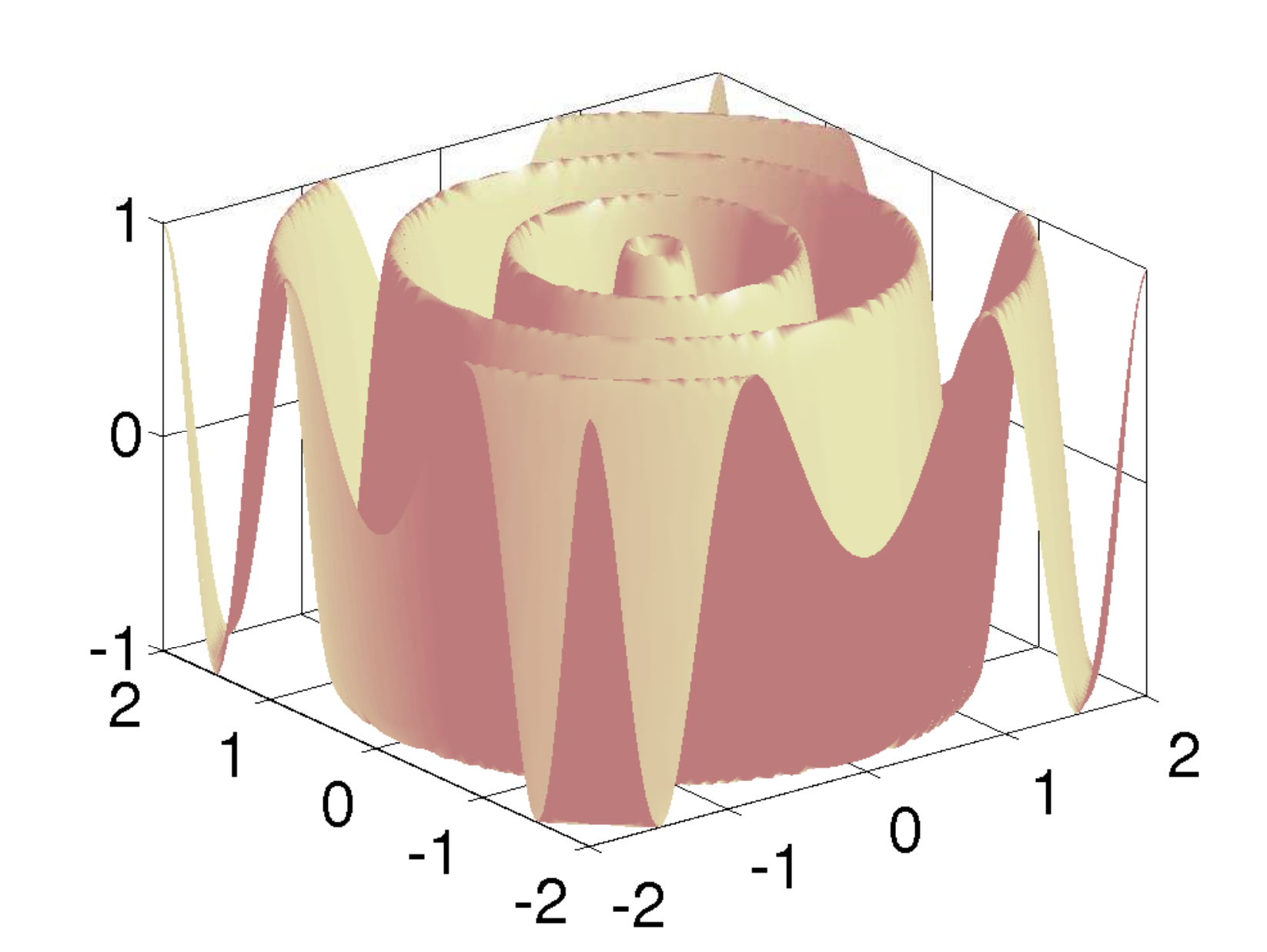}
\includegraphics[width=6cm]{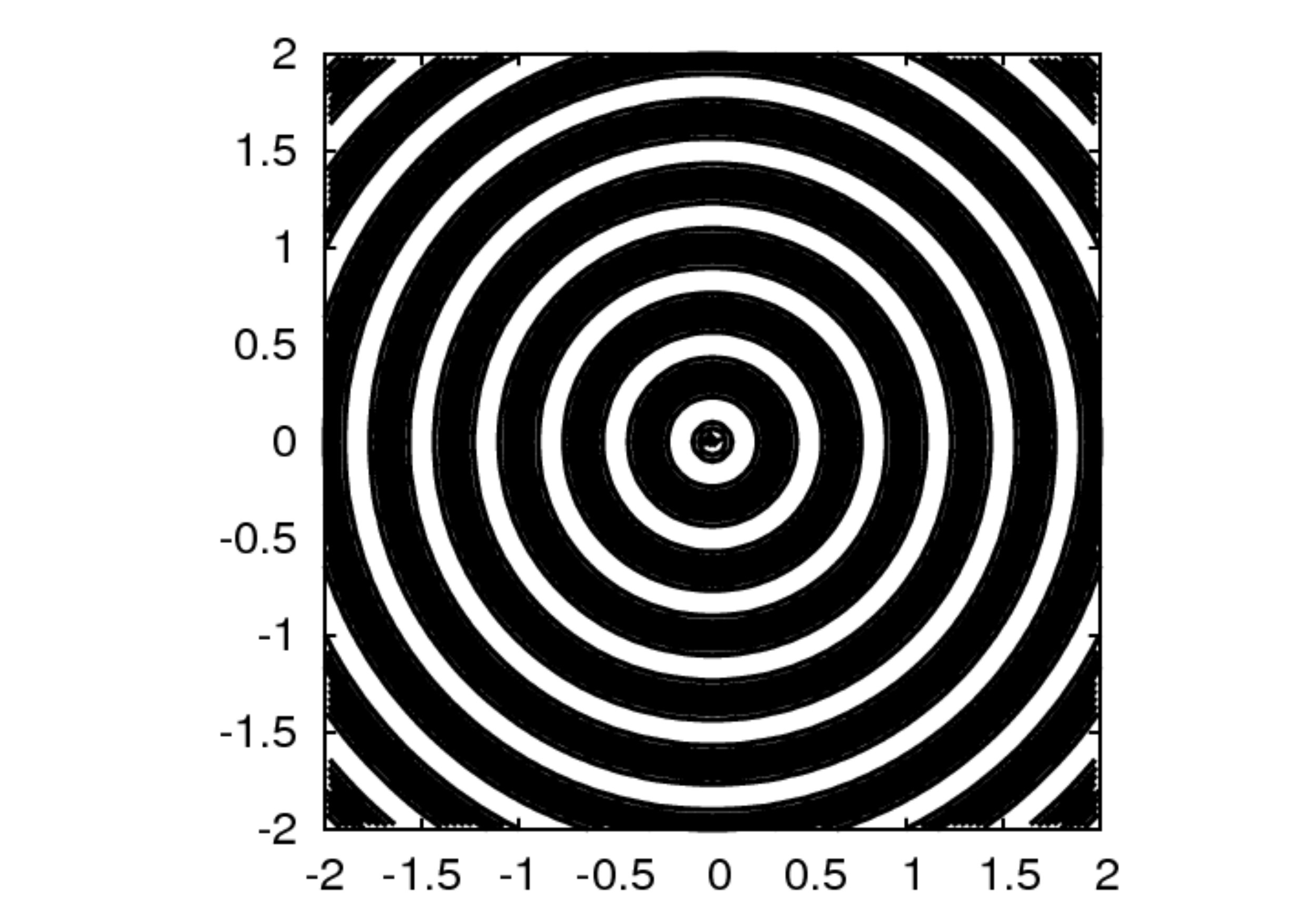}
\includegraphics[width=6cm]{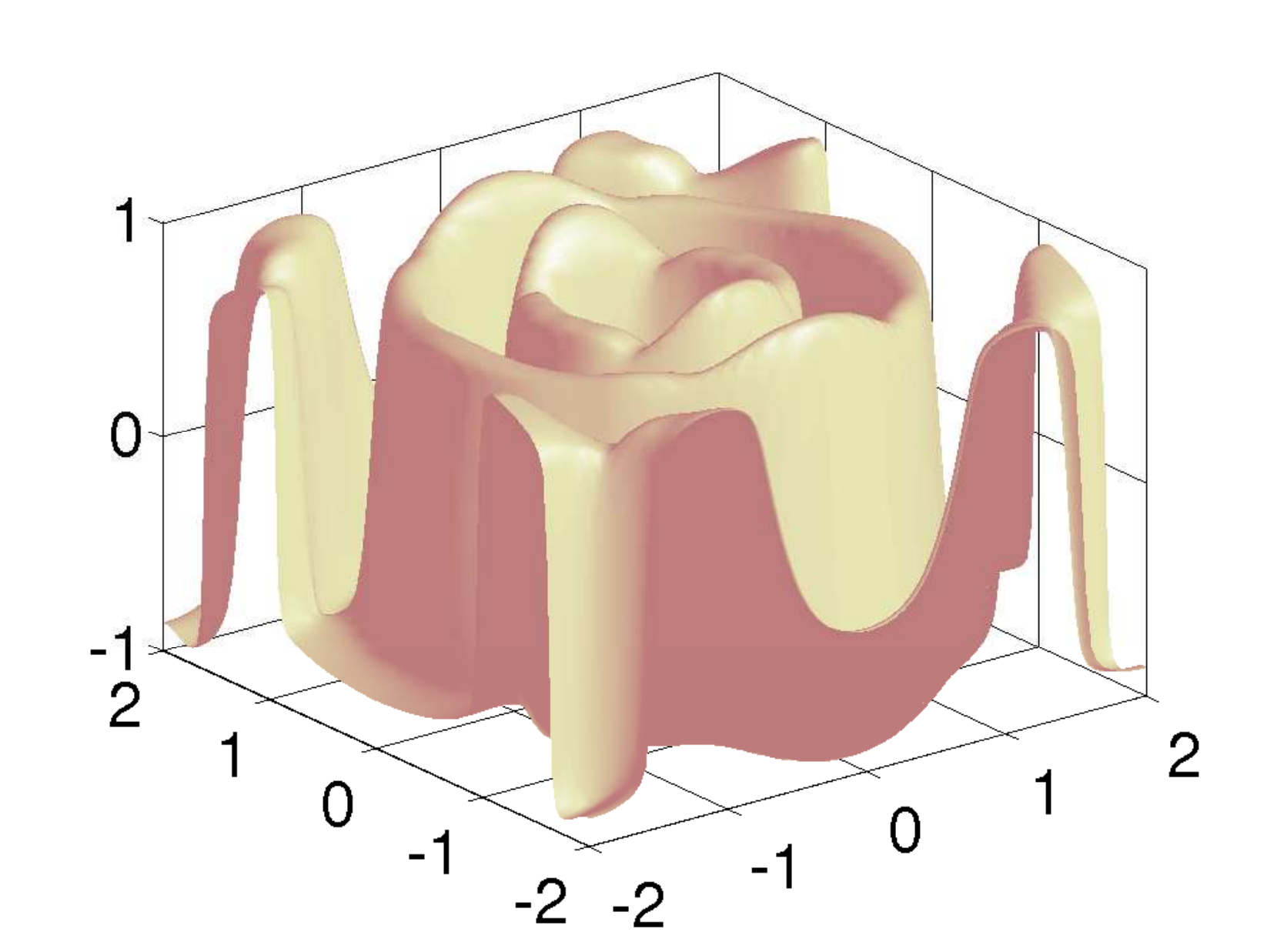}
\includegraphics[width=6cm]{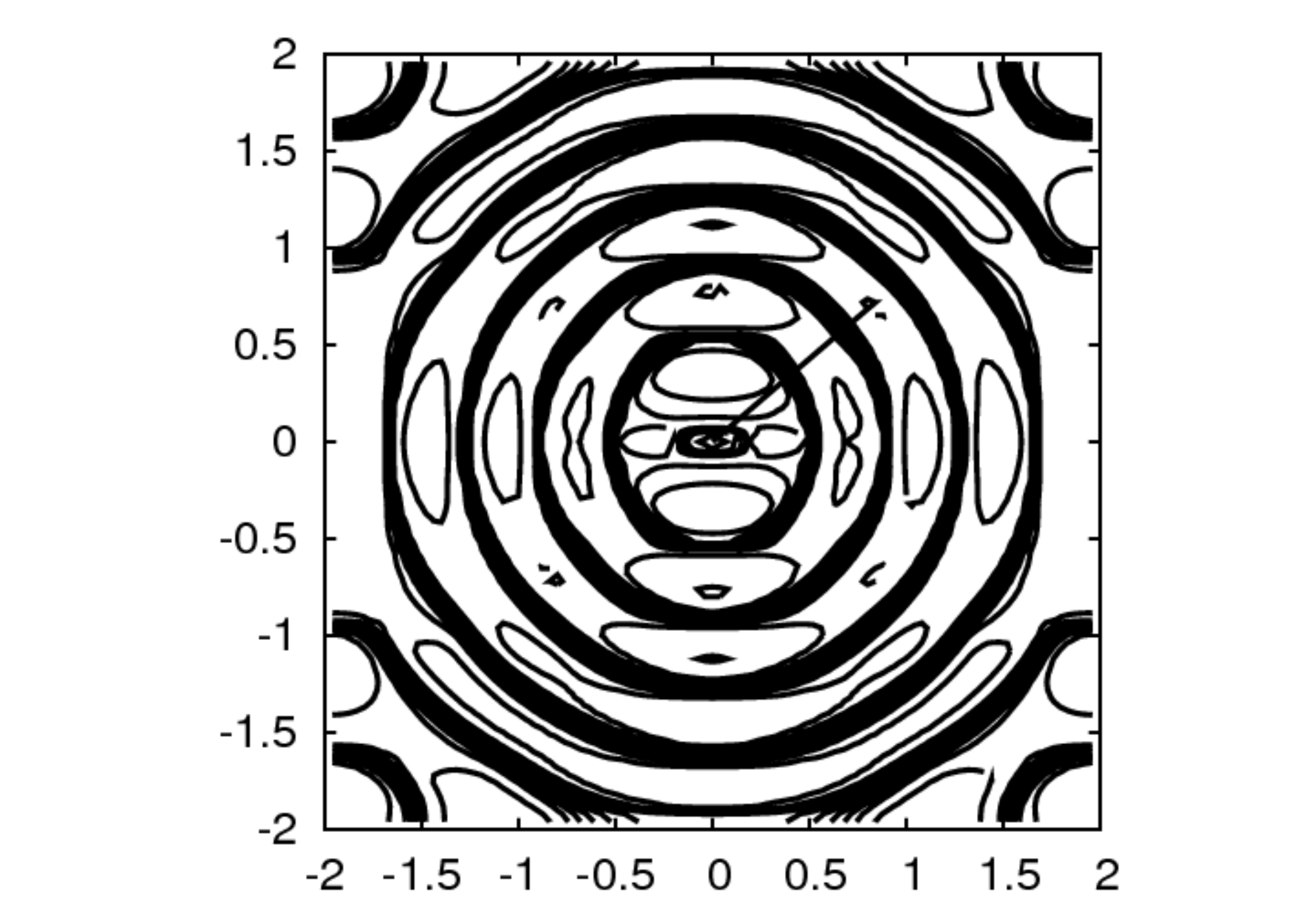}
\includegraphics[width=6cm]{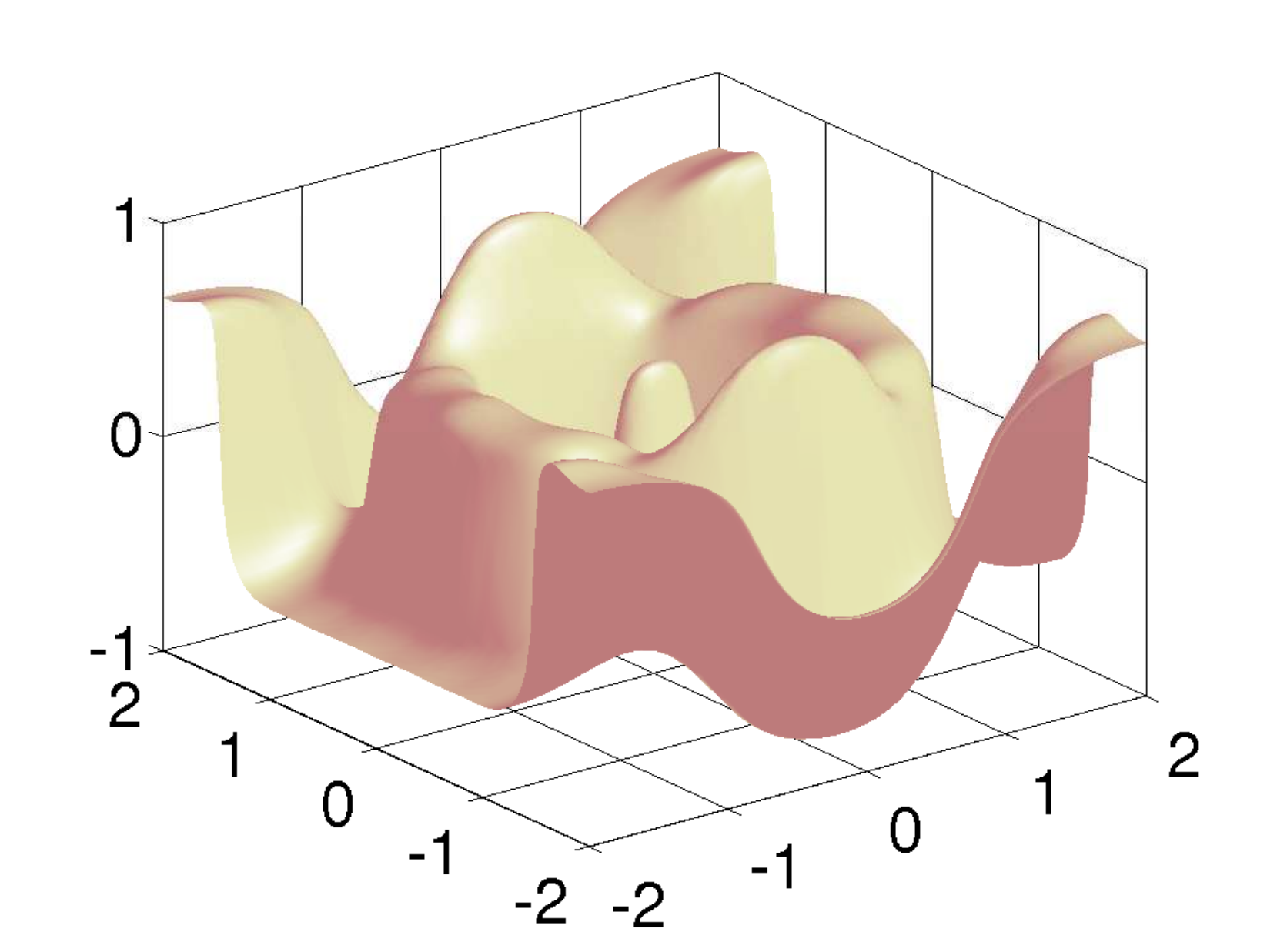}
\includegraphics[width=6cm]{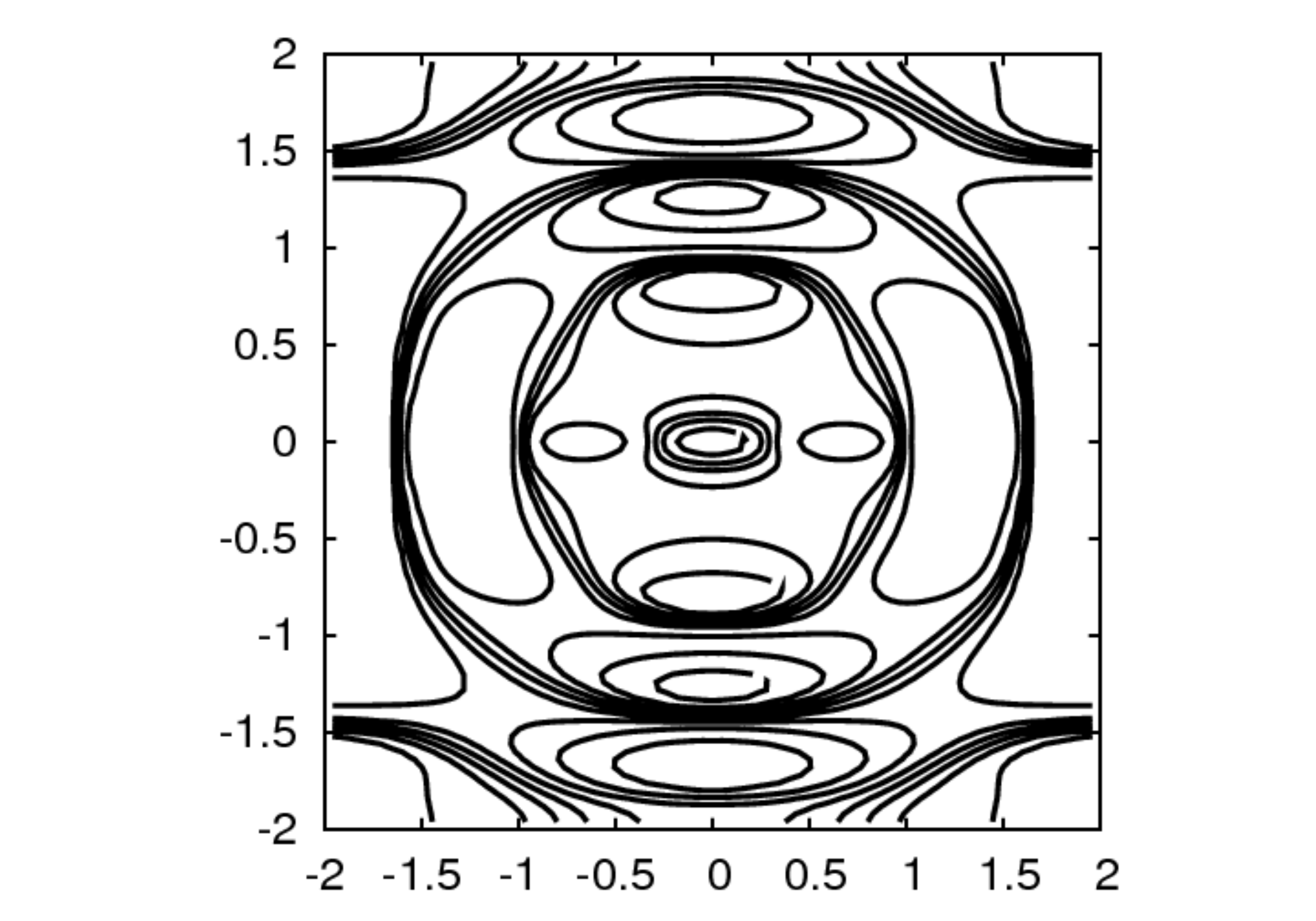}
\includegraphics[width=6cm]{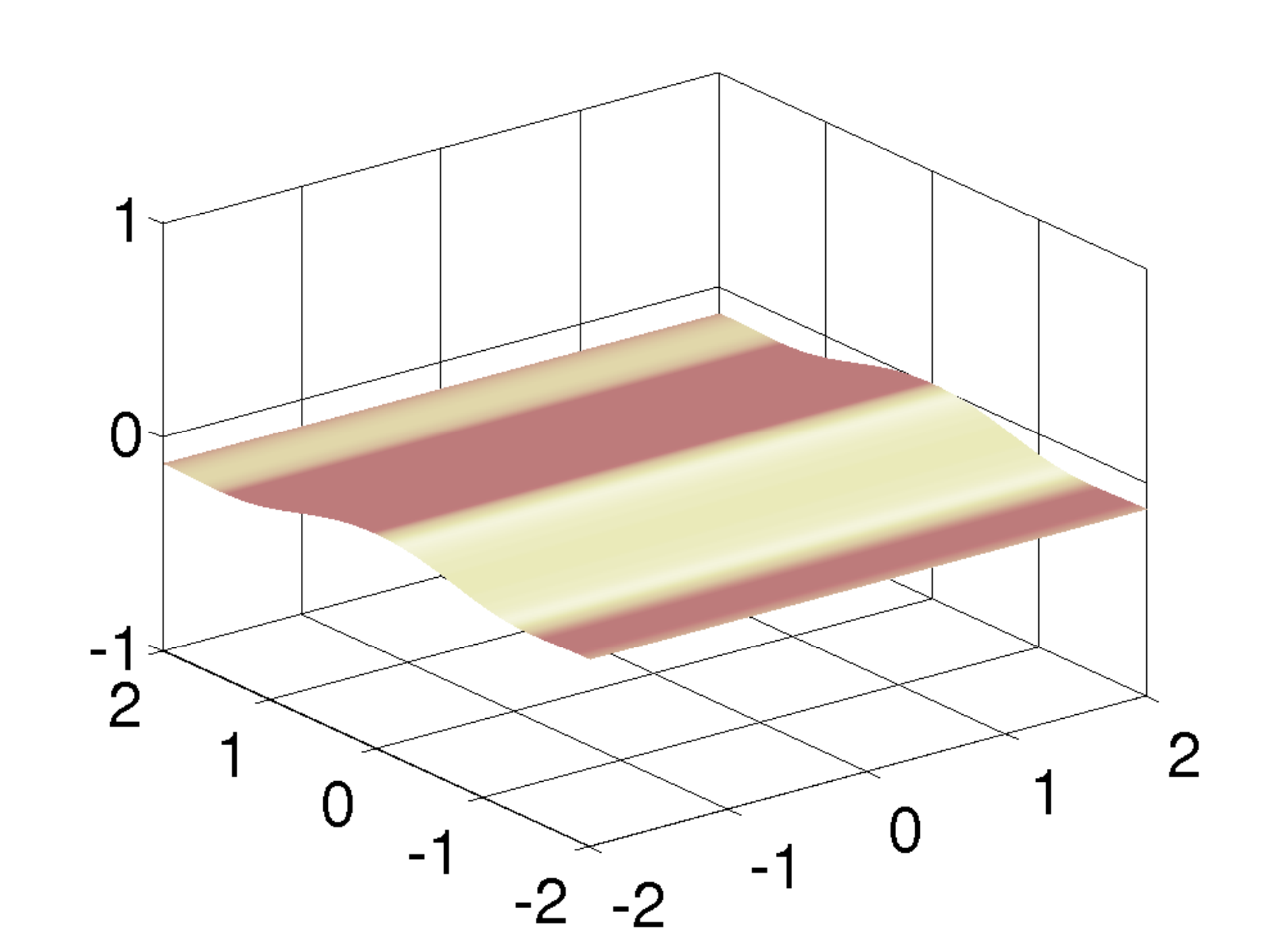}
\includegraphics[width=6cm]{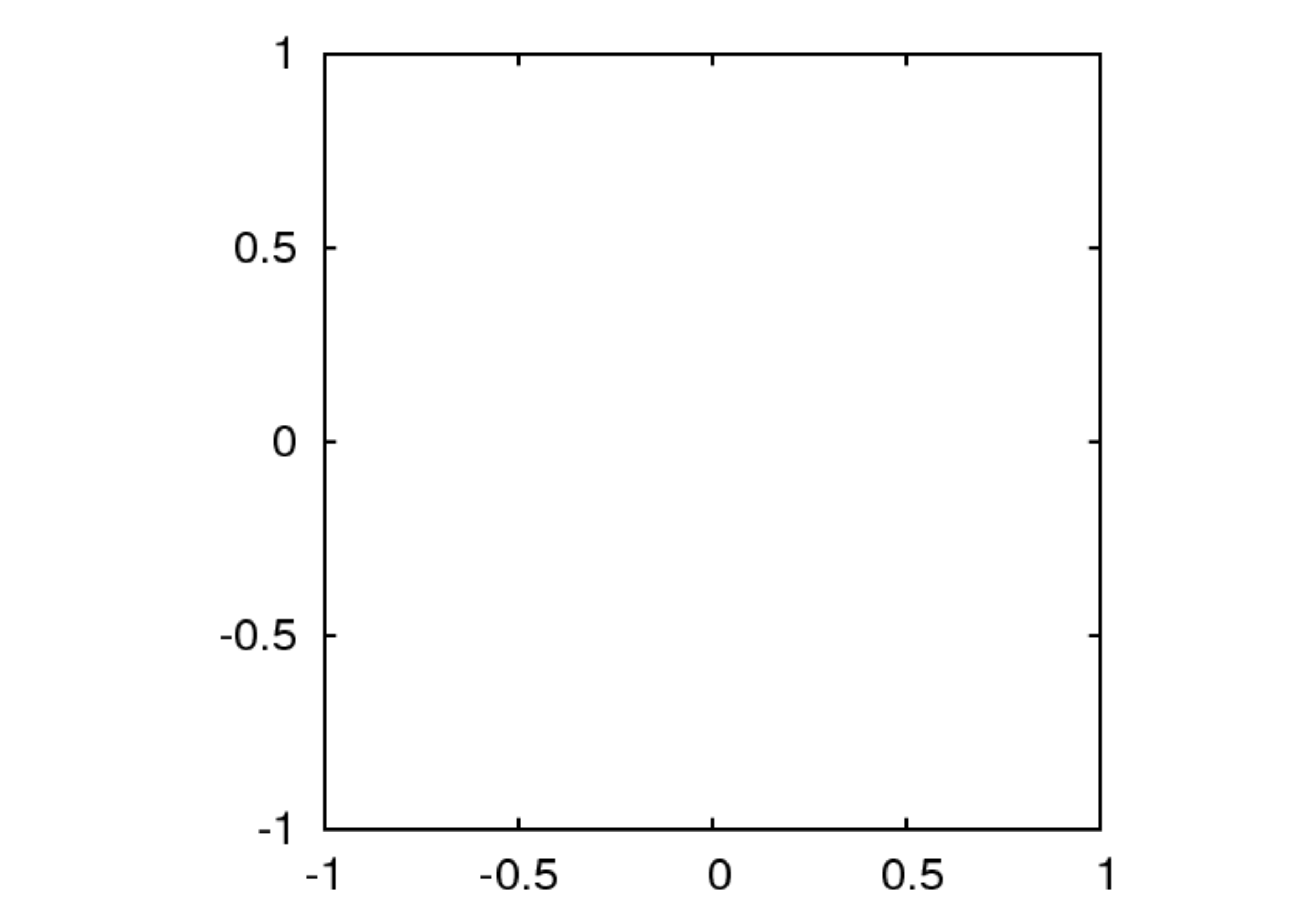}
}
\caption{Results obtained with the anisotropy given by (\ref{def:quadratic-form-anisotropy}) and (\ref{def:aniso-G-8-0-0-1}) at times $t=0$, $t=1.6 \cdot 10^{-5}$, $t=1.28 \cdot 10^{-4}$ and $t=0.001$ (graph of $u$ on the left, level-lines of $u$ on the right)}
\label{figure-1}
\end{figure}
The Figure \ref{figure-2} show the same anisotropy (\ref{def:quadratic-form-anisotropy}) but with 
\begin{equation}
\mathbbm{G} := \left( 
\label{def:aniso-G-10-8-8-10}
\begin{array}{cc}
10 & 8 \\
8 & 10
\end{array}
\right)
\end{equation}
on the time interval $\left[0,\real{1.024}{-3}\right]$.
\begin{figure}
\center{
\includegraphics[width=6cm]{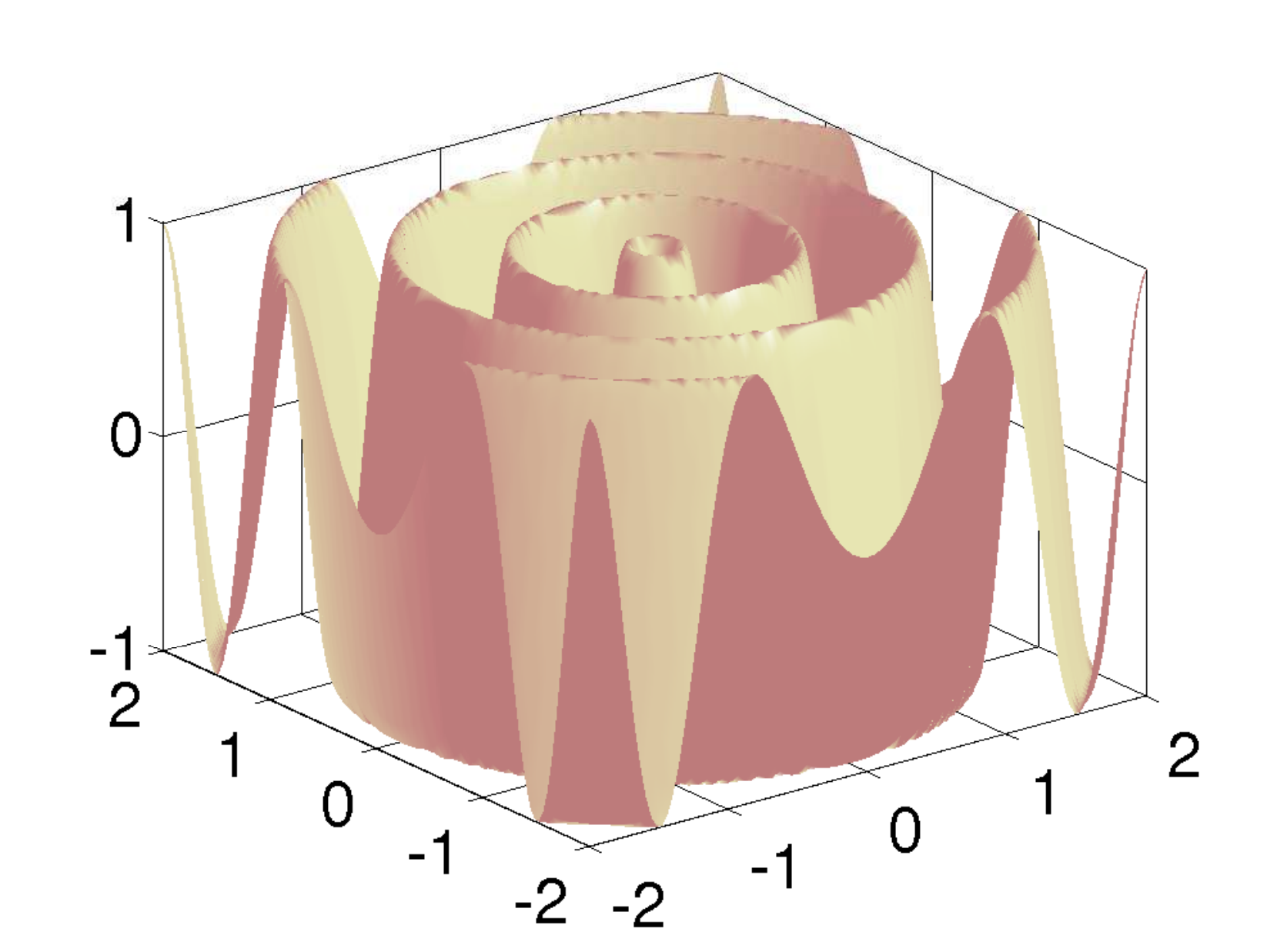}
\includegraphics[width=6cm]{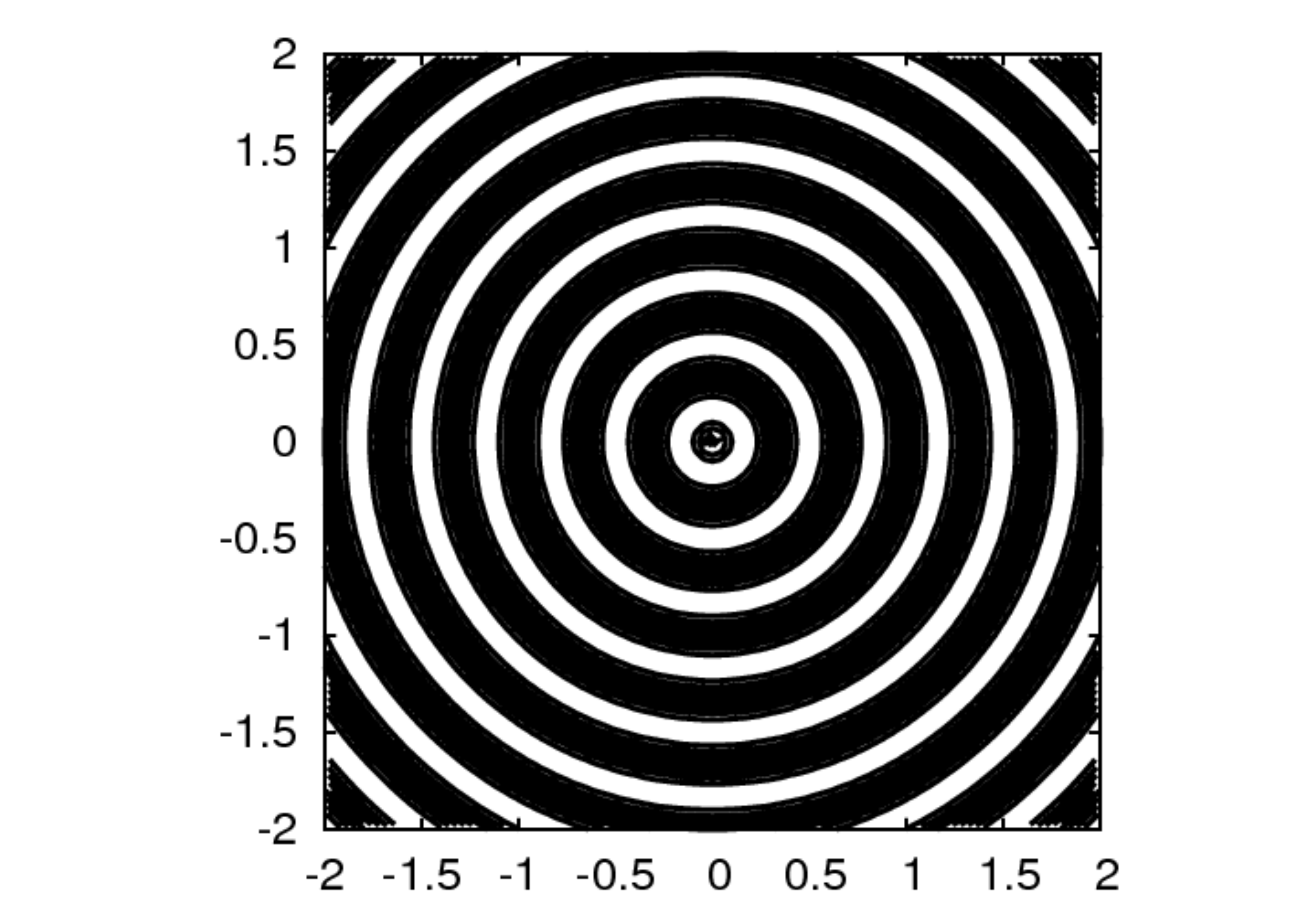}
\includegraphics[width=6cm]{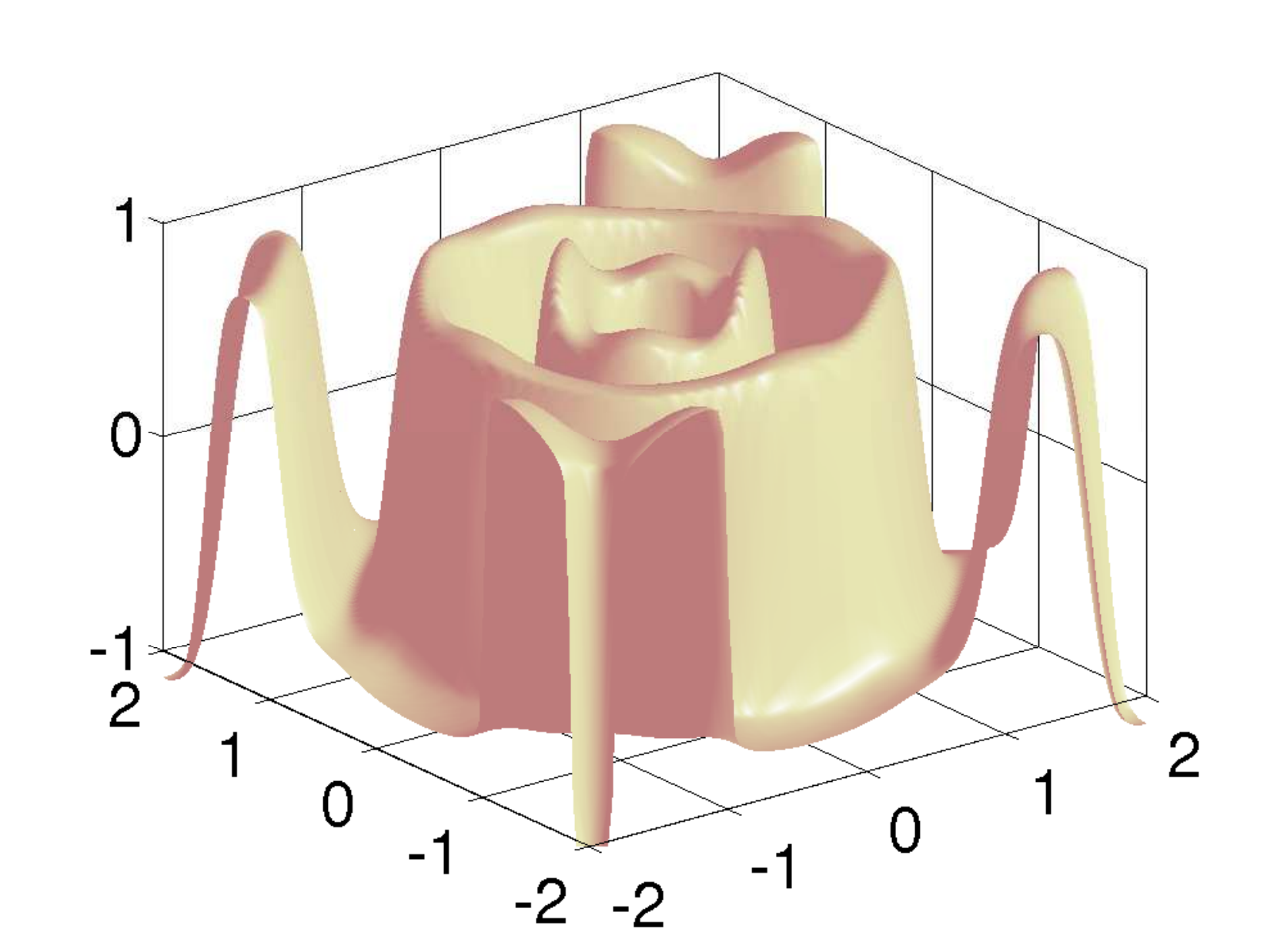}
\includegraphics[width=6cm]{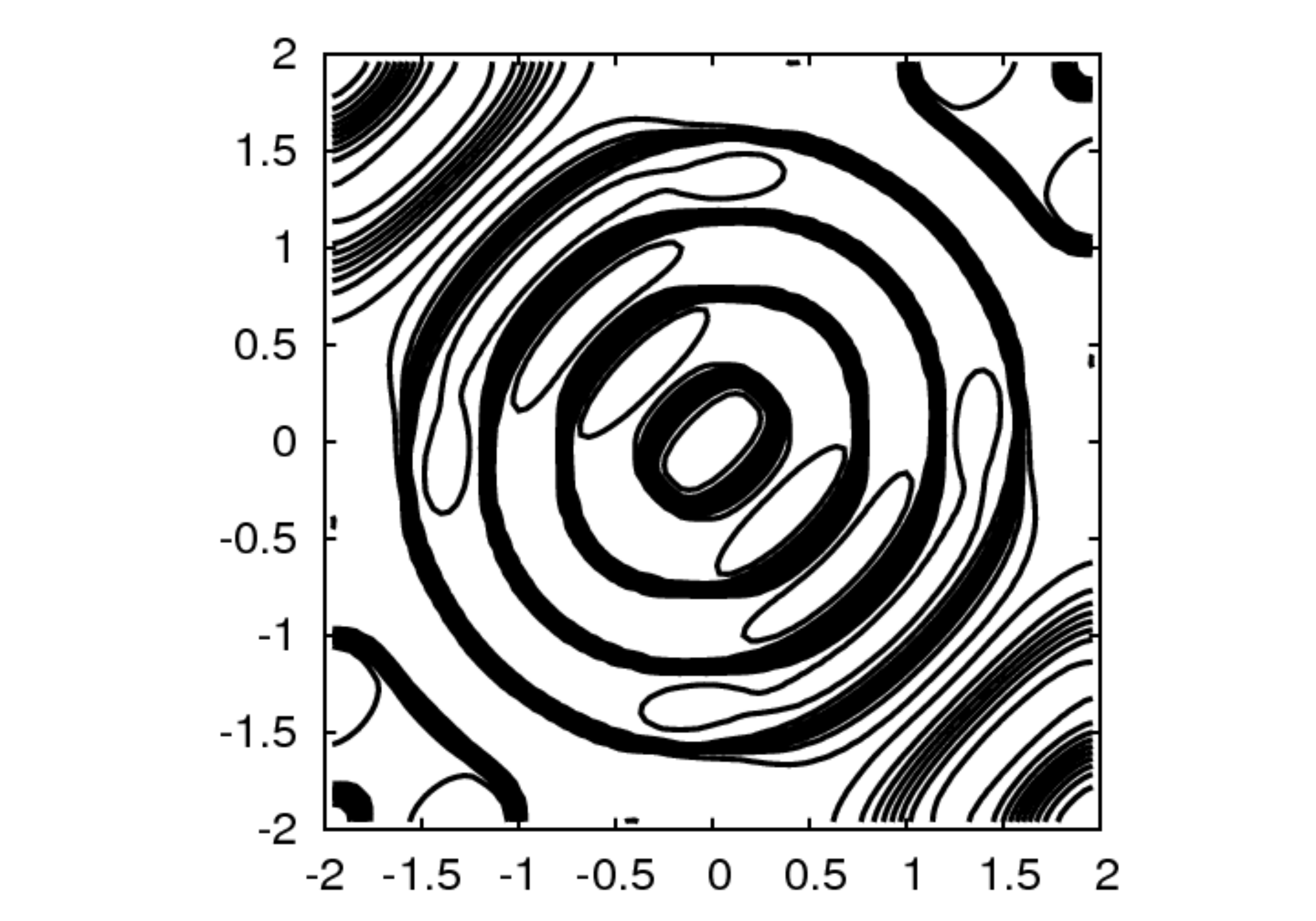}
\includegraphics[width=6cm]{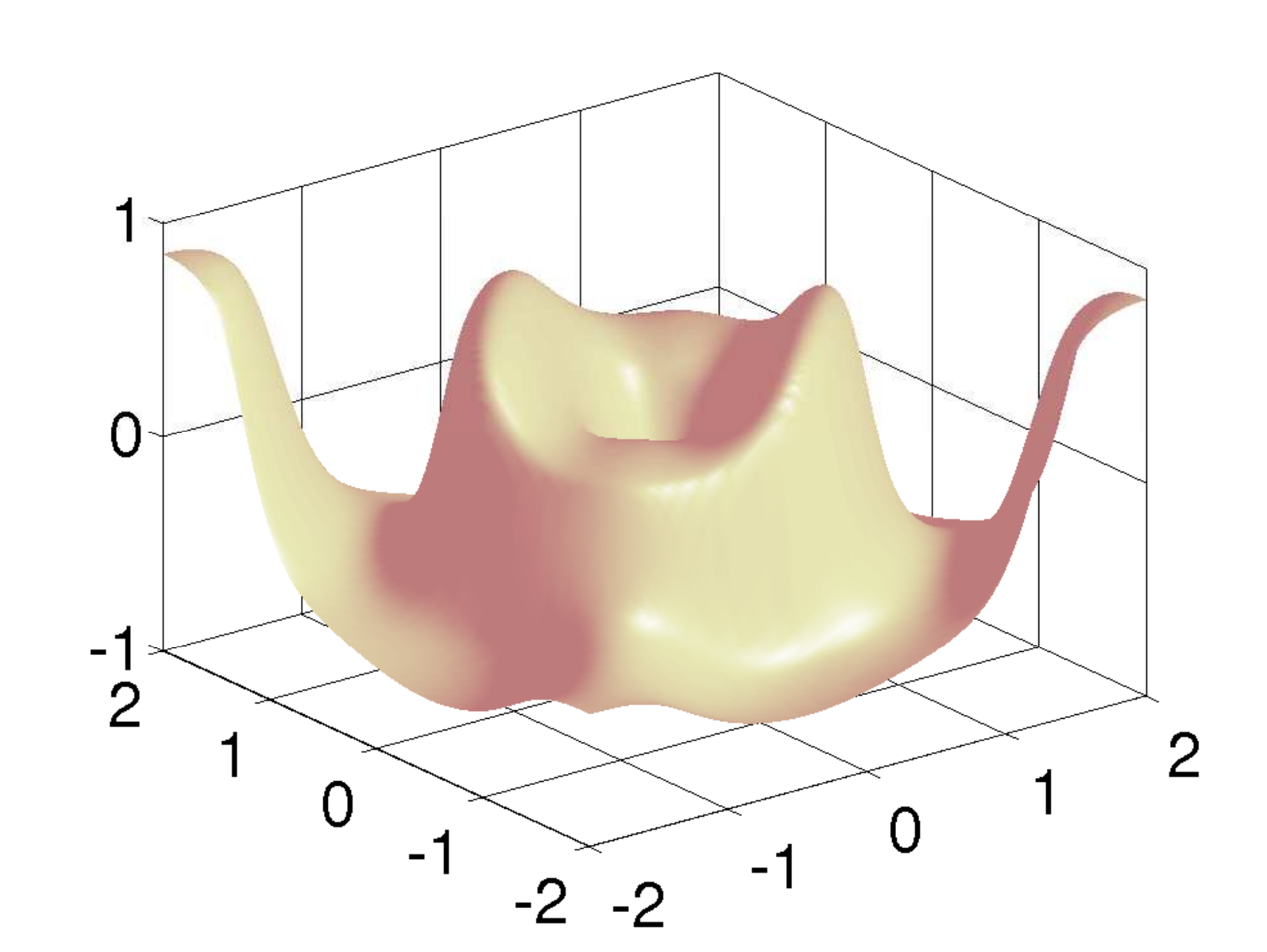}
\includegraphics[width=6cm]{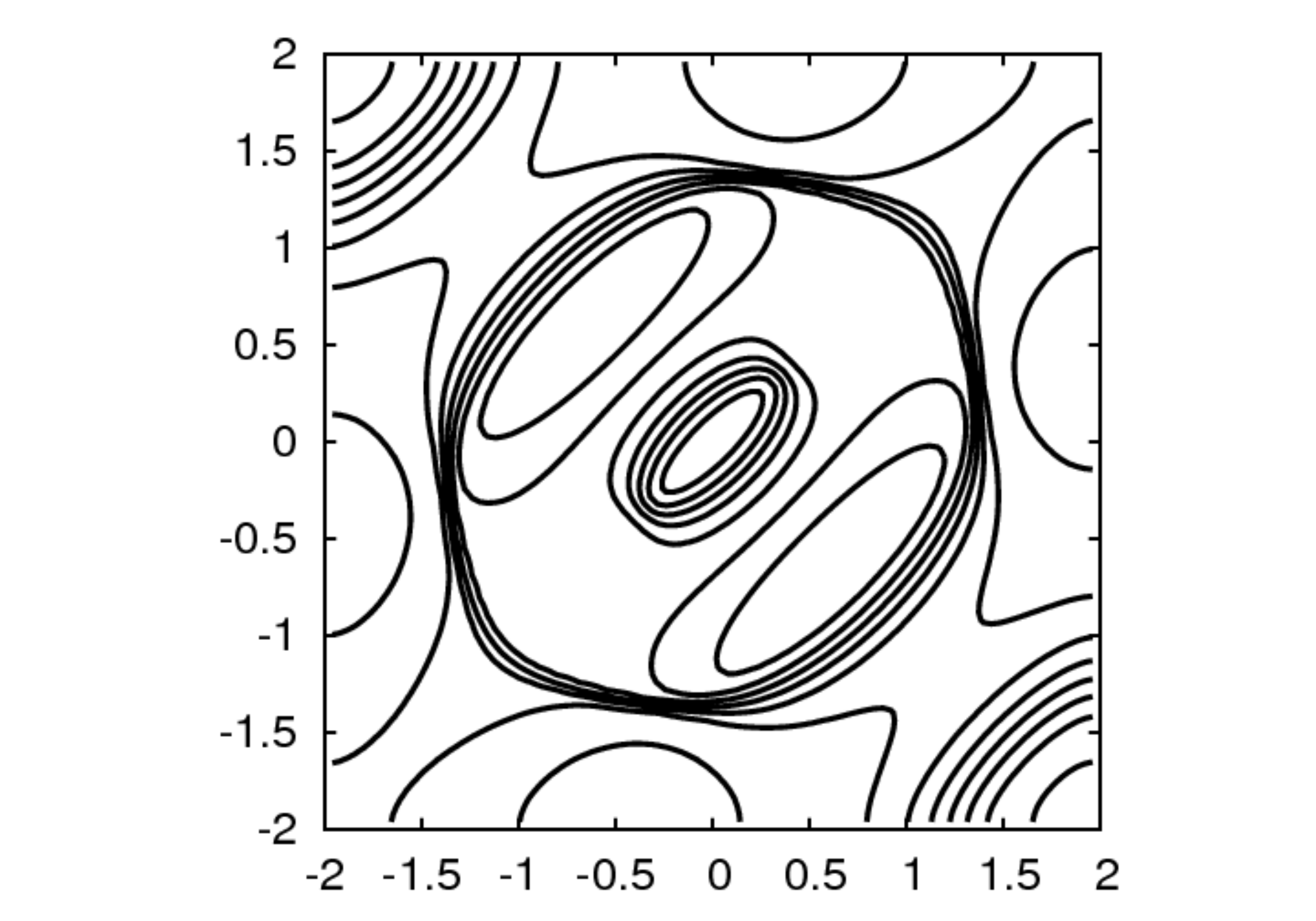}
\includegraphics[width=6cm]{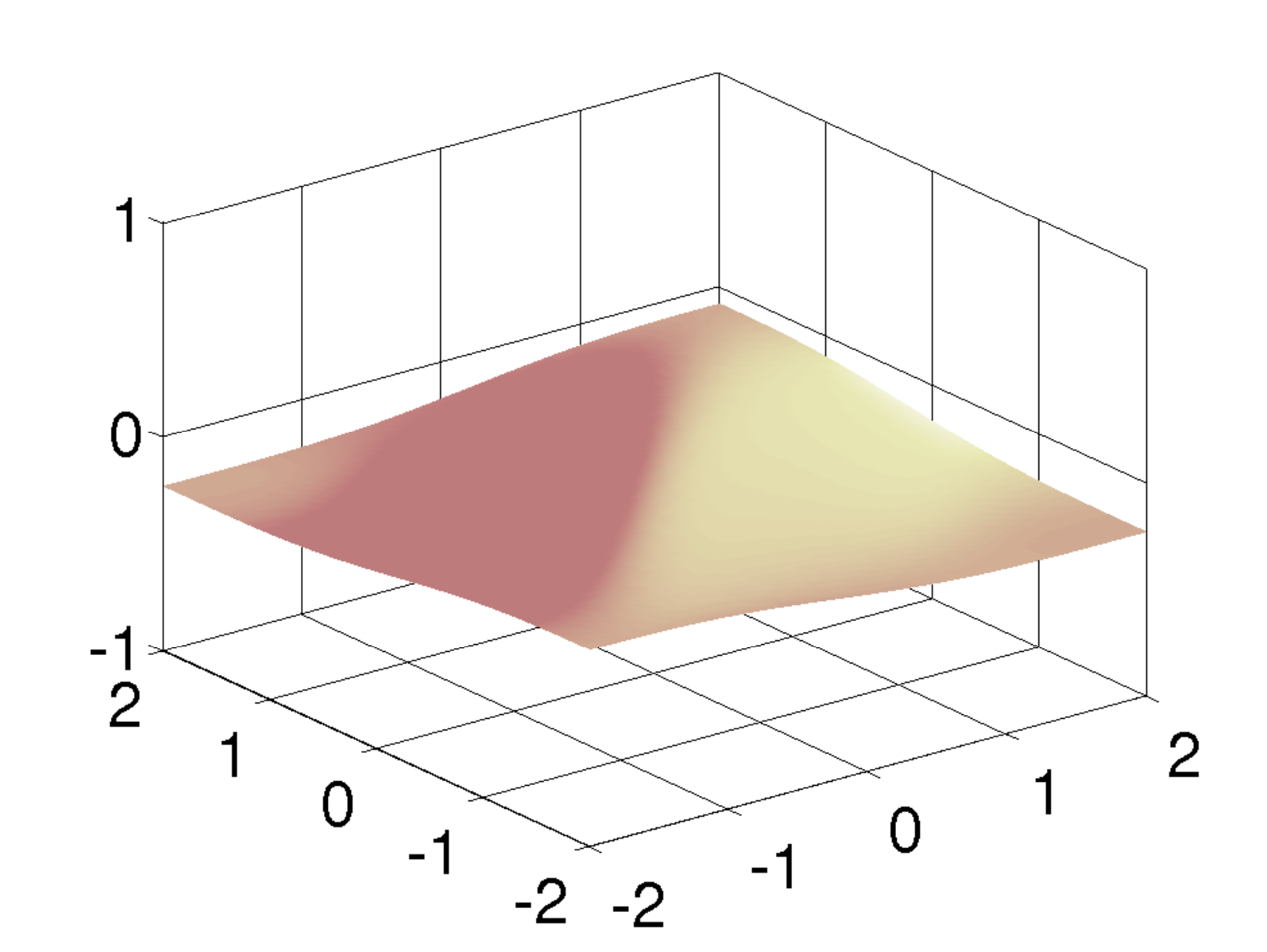}
\includegraphics[width=6cm]{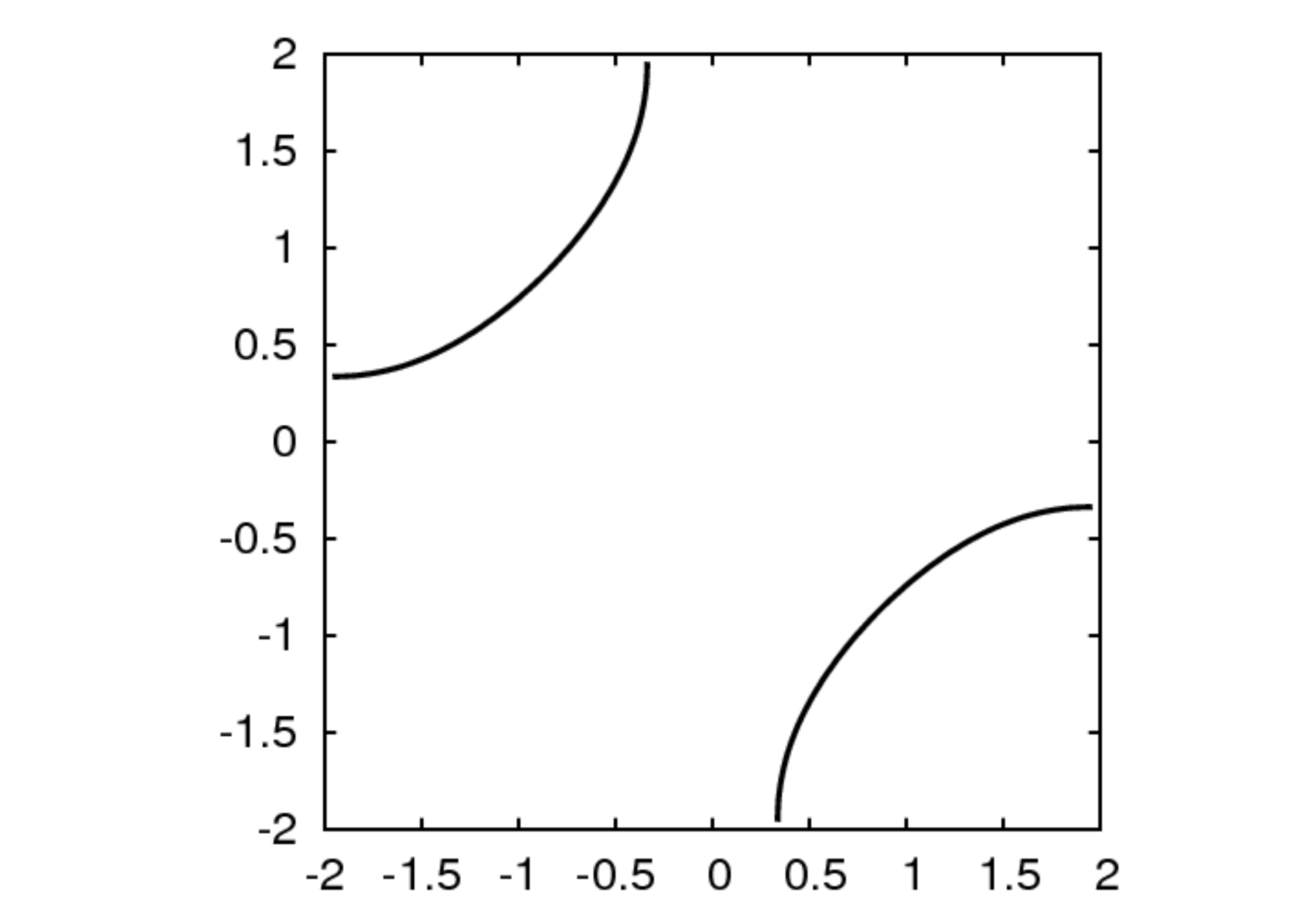}
}
\caption{Results obtained with the anisotropy given by (\ref{def:quadratic-form-anisotropy}) and (\ref{def:aniso-G-10-8-8-10}) at times $t=0$, $t=8 \cdot 10^{-6}$, $t=6.4 \cdot 10^{-5}$ and $t=1.024 \cdot 10^{-3}$ (graph of $u$ on the left, level-lines of $u$ on the right)}
\label{figure-2}
\end{figure}
And finally the Figure \ref{figure-3} reveals the behavior of the anisotropy given by (\ref{def:gamma2}) with $\epsilon_{abs}=0.001$ on the time interval $\left[0,0.006\right]$.
\begin{figure}
\center{
\includegraphics[width=6cm]{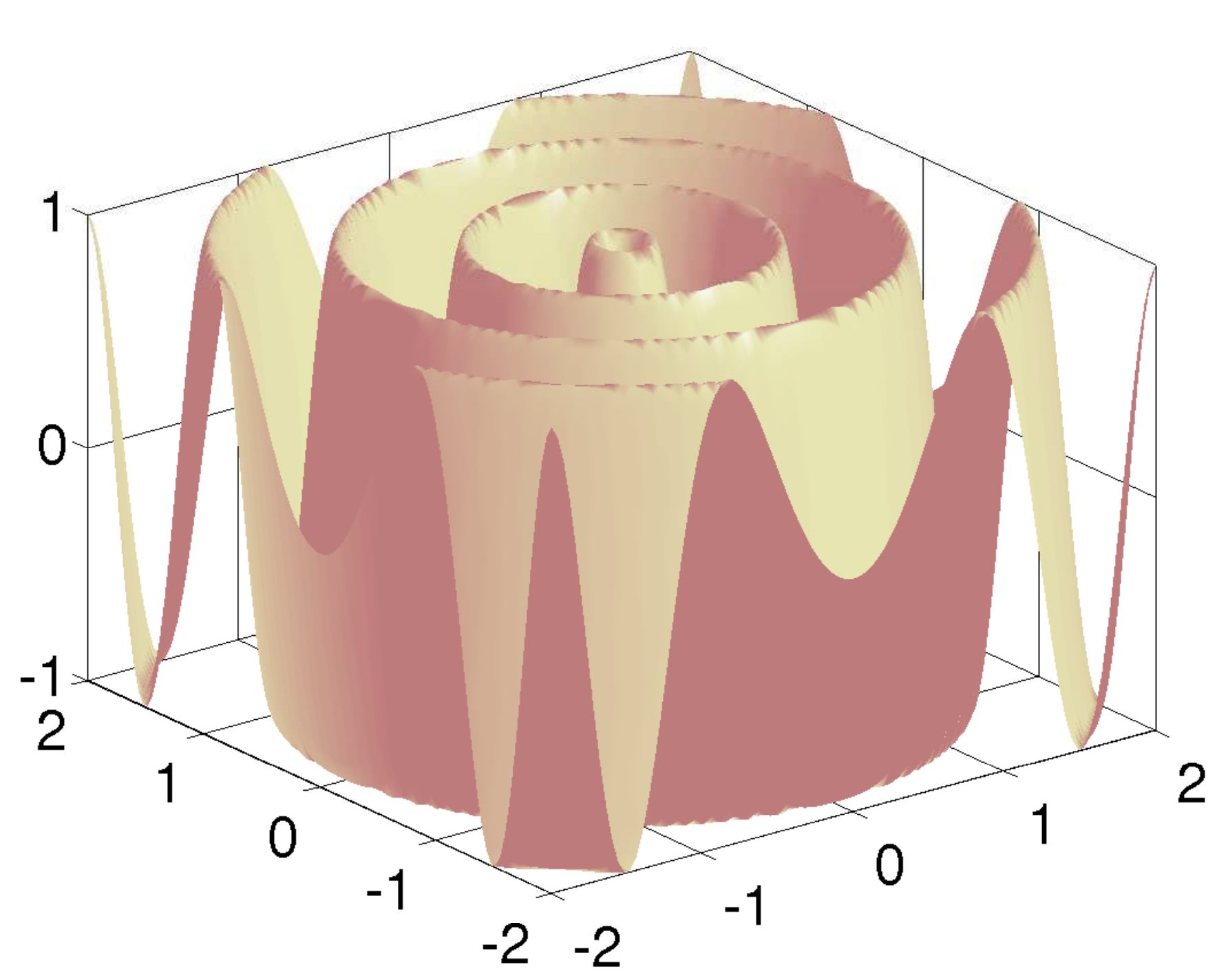}
\includegraphics[width=6cm]{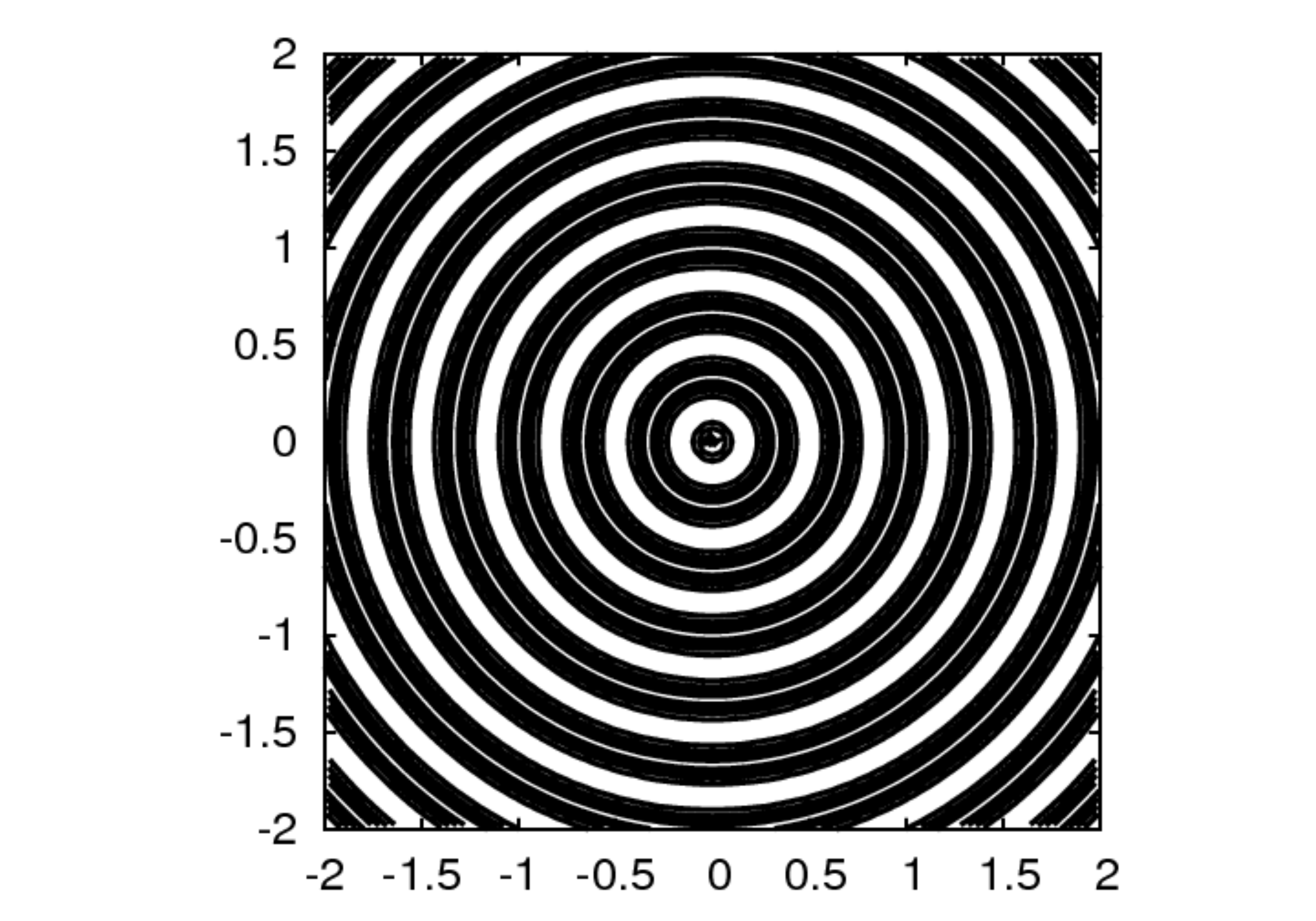}
\includegraphics[width=6cm]{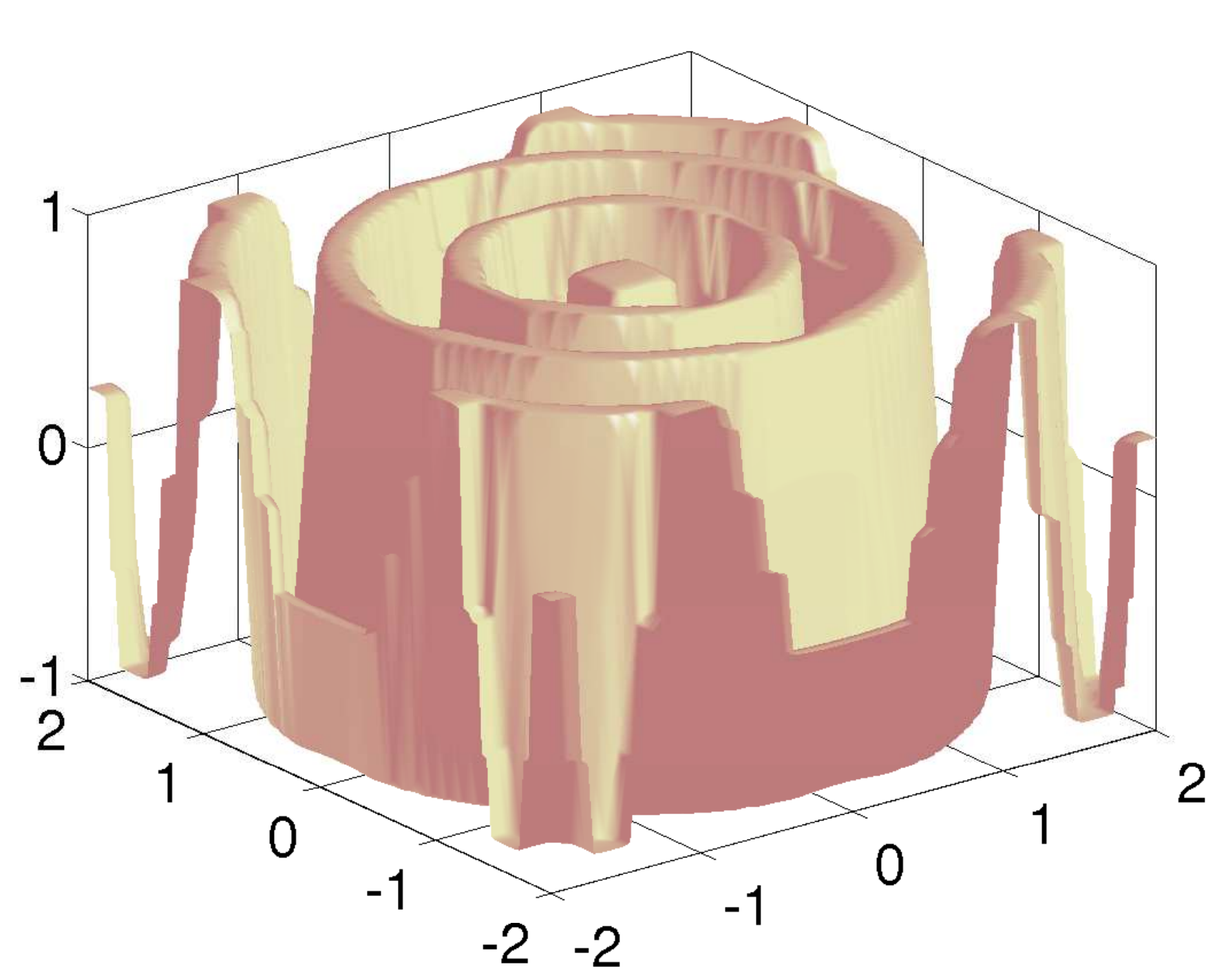}
\includegraphics[width=6cm]{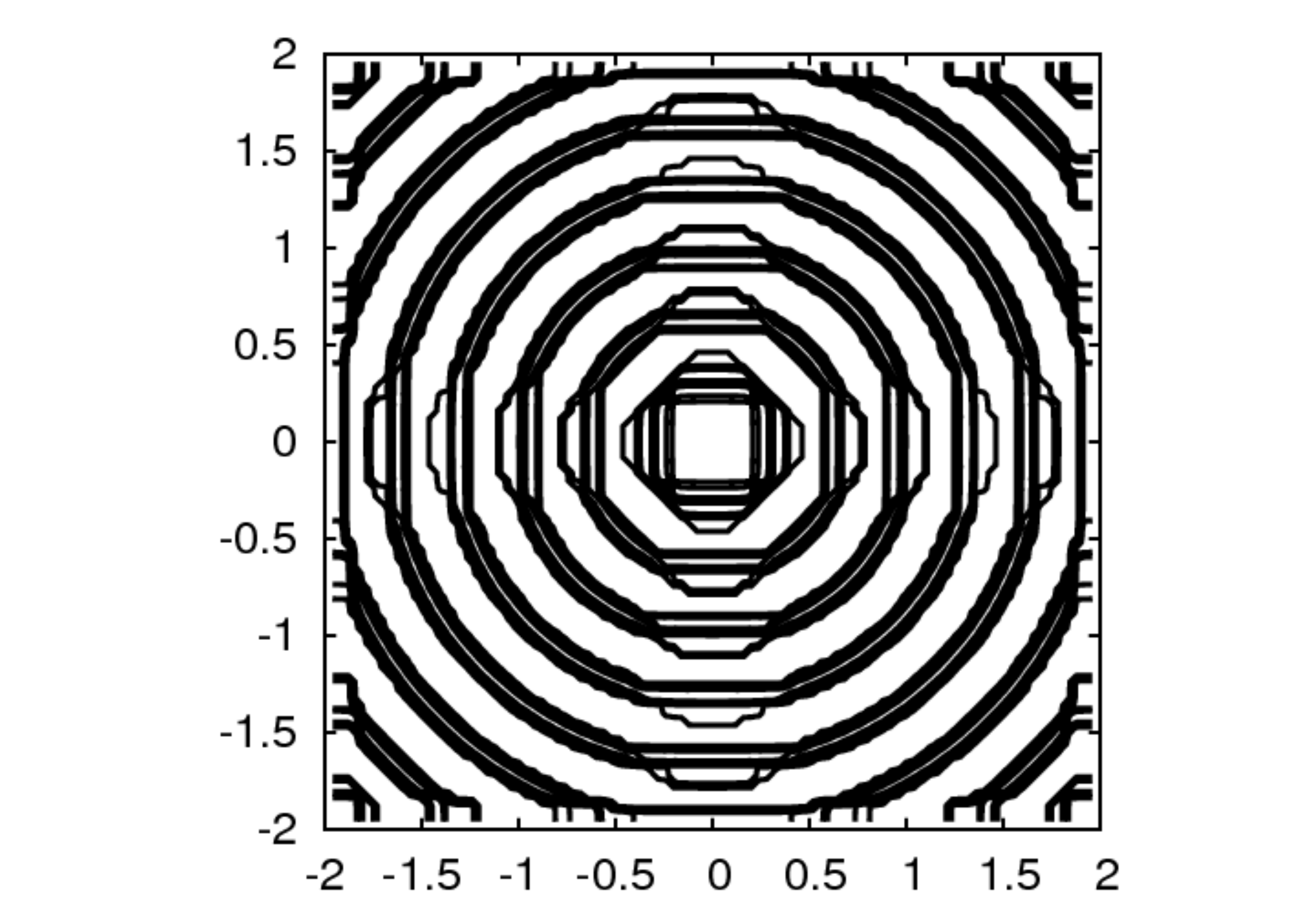}
\includegraphics[width=6cm]{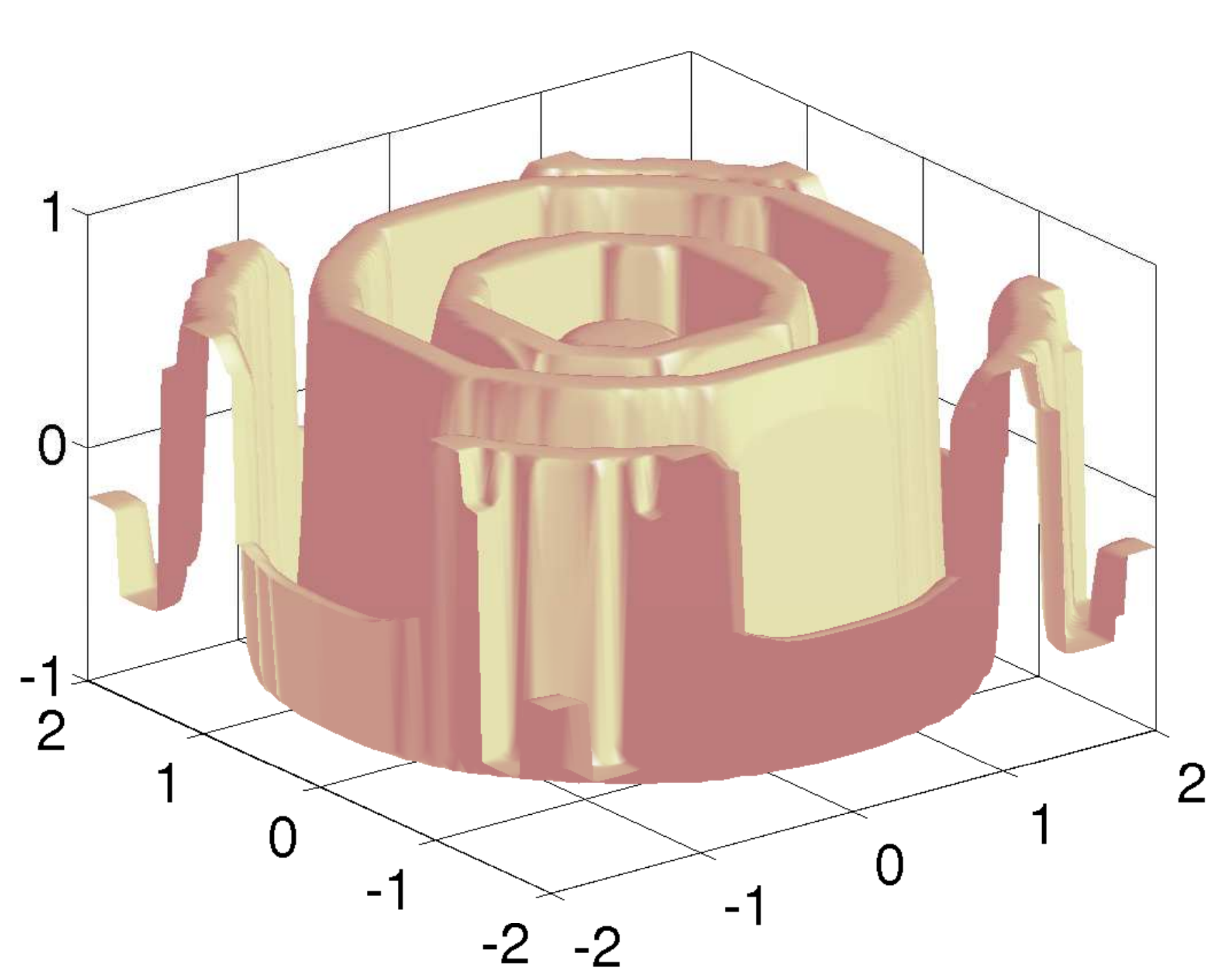}
\includegraphics[width=6cm]{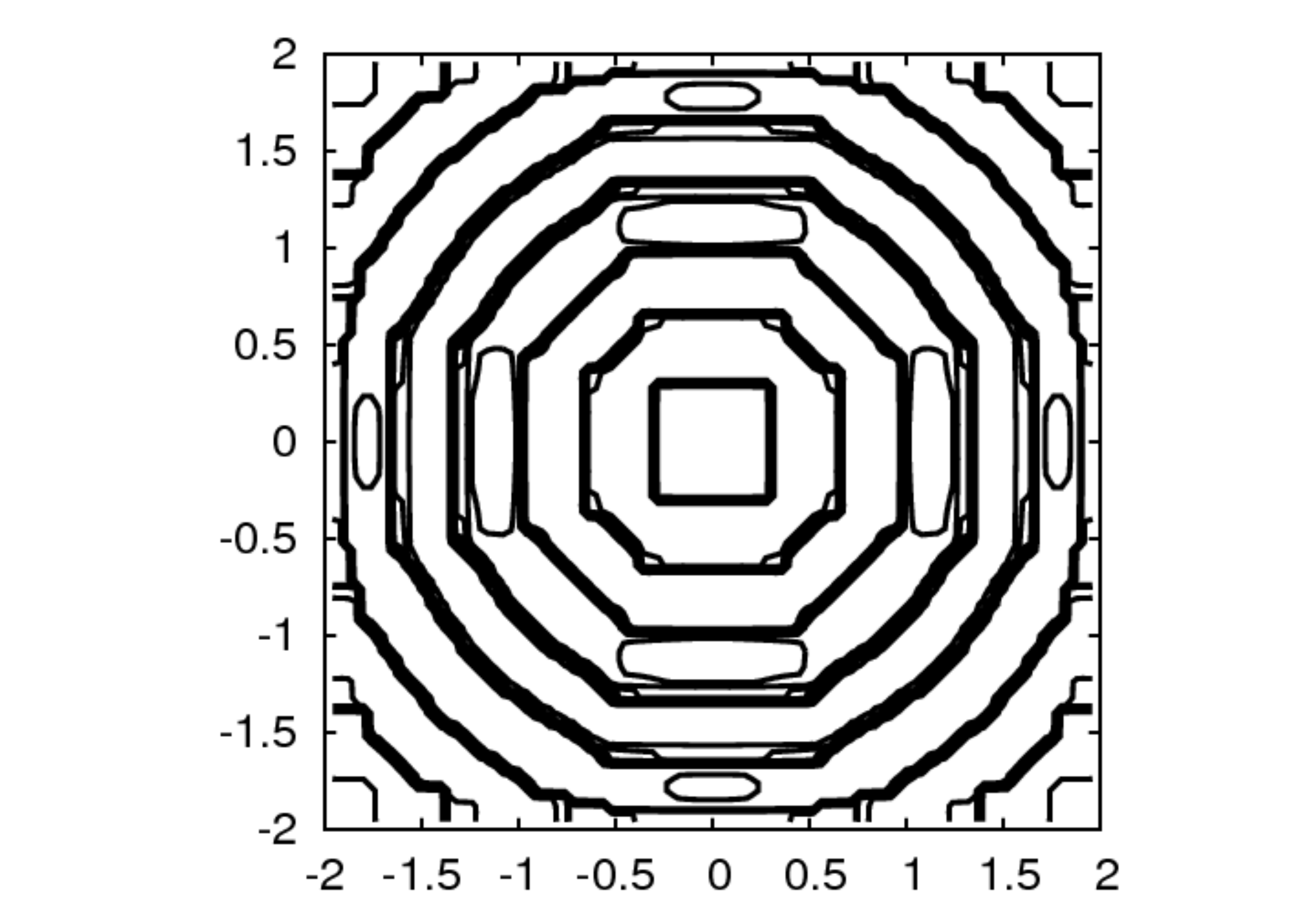}
\includegraphics[width=6cm]{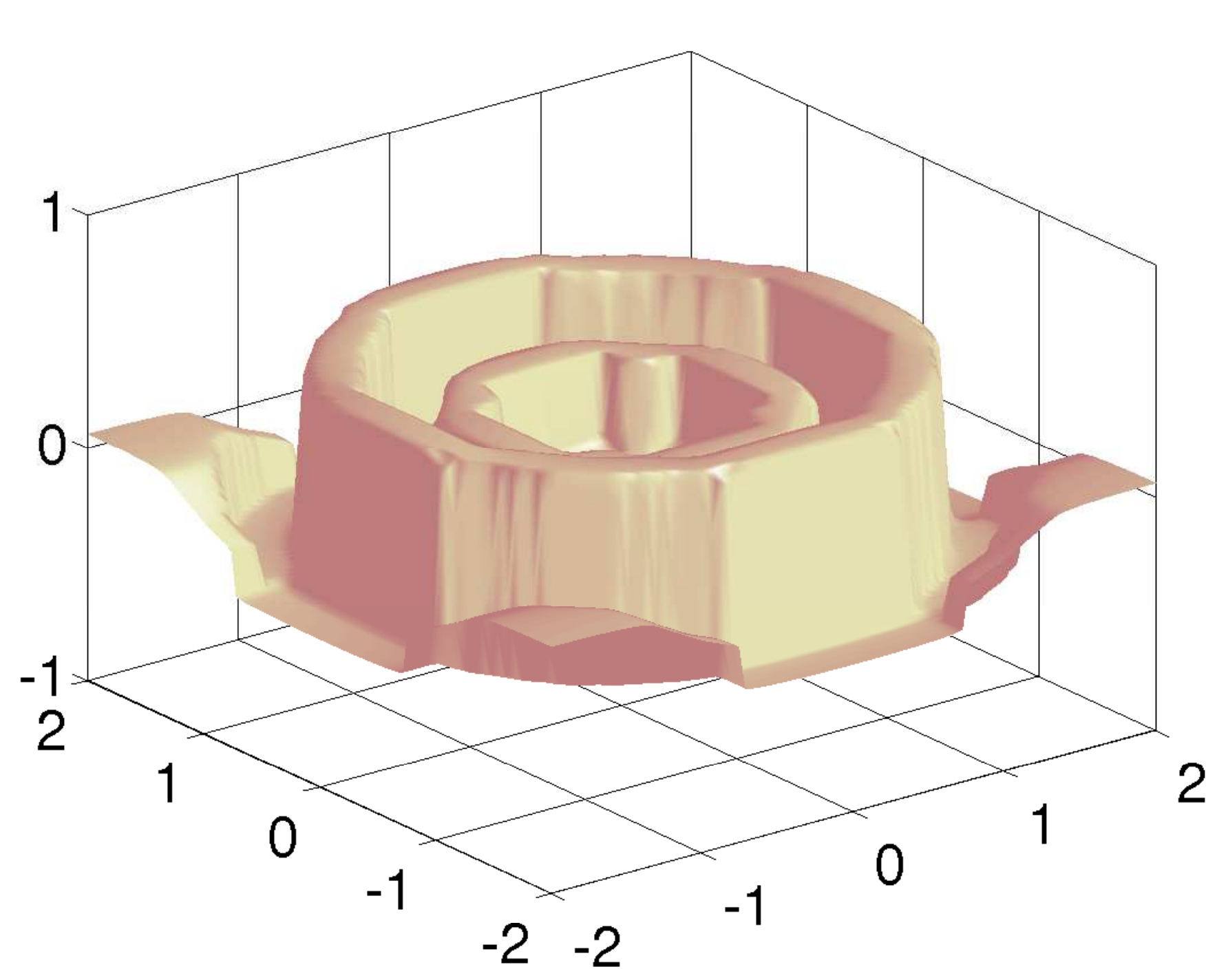}
\includegraphics[width=6cm]{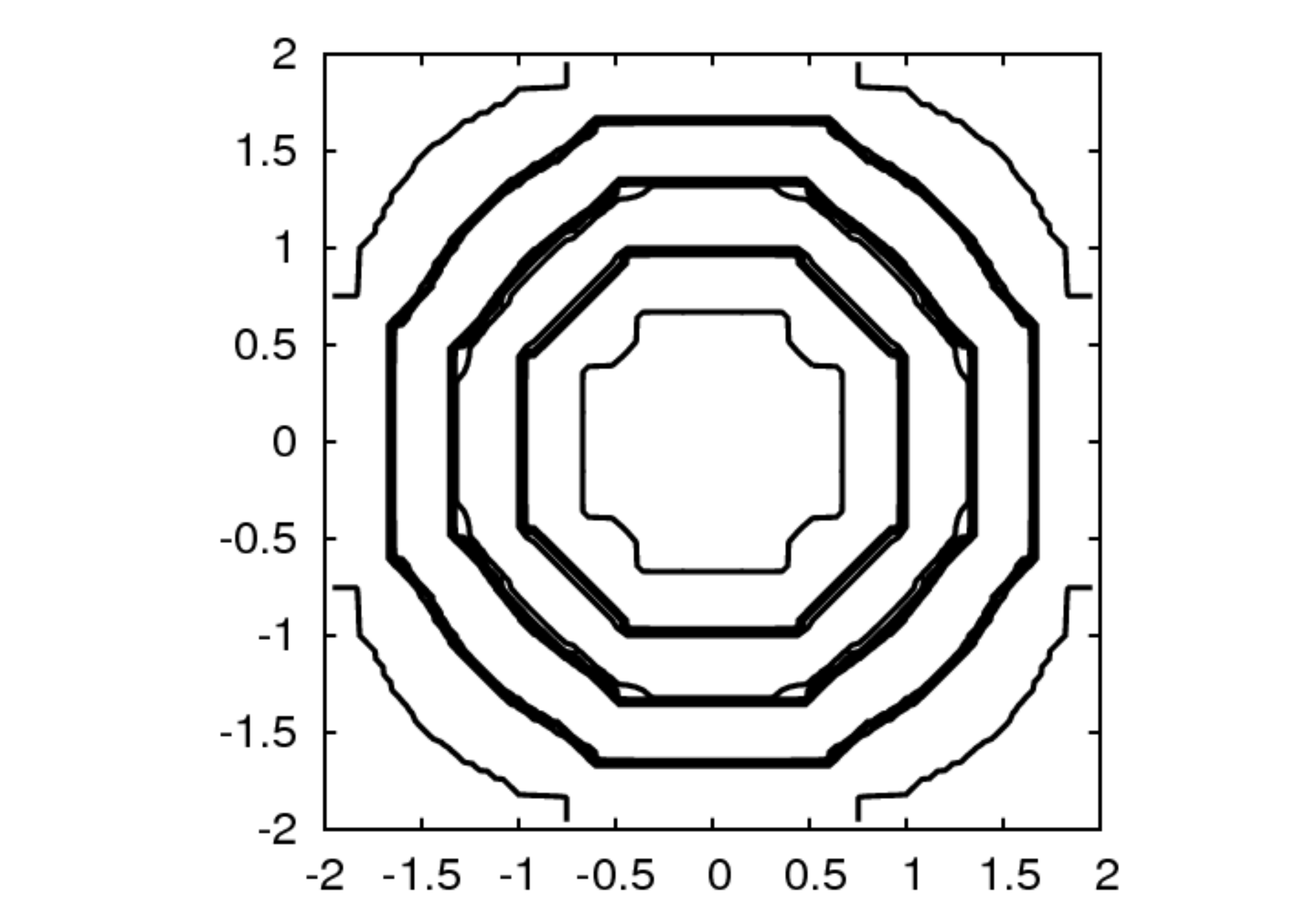}
}
\caption{Results obtained with the anisotropy given by (\ref{def:gamma2}) and $\epsilon_{abs}-0.1$ at times $t=0$, $t=\real{5}{-5}$, $t=0.001$ and $t=0.006$ -- not a steady state (graph of $u$ on the left, level-lines of $u$ on the right)}
\label{figure-3}
\end{figure}

\section{Conclusion}

We have presented new mathematical formulation for the anisotropic Willmore flow of graphs. We have proved energy equality for the new problem. To approximate the solution numerically we showed complementary finite volume based scheme. We also show how to re-formulate the numerical scheme by the finite difference method. This approach allowed us to prove stability of the numerical scheme. The computational part of the article contains experimental order of convergence evaluated on both studied anisotropies. We also showed few qualitative results.

\section{Acknowledgment}

This work was partially supported by the grant No. SGS11/161/OHK4/3T/14 of the Student Grant Agency of the Czech Technical University in Prague, Research Direction Project of the Ministry of Education of the Czech Republic No. MSM6840770010 and Research center of the Ministry of Education of the Czech Republic LC06052.

\clearpage


\begin{thebibliography}{10}

\bibitem{Benes-Mikula-Oberhuber-Sevcovic-2007-1}
M.~Bene\v{s}, K.~Mikula, T.~Oberhuber, and D.~\v{S}ev\v{c}ovi\v{c},
  \emph{Comparison study for level set and direct {L}agrangian methods for
  computing {W}illmore flow of closed plannar curves}, Computing and
  Visualization in Science \textbf{12} (2009), 307--317.

\bibitem{BertalmioCasellesHaroSapiro-2006}
M.~Bertalmio, V.~Caselles, G.~Haro, and G.~Sapiro, \emph{Handbook of
  mathematical models in computer vision}, ch.~PDE-Based Image and Surface
  Inpainting, pp.~33--61, Springer, 2006.

\bibitem{Canham-1970}
P.B. Canham, \emph{The minimum energy of bending as a possible explanation of
  biconcave shape of the human red blood cell}, J.Theoret.Biol. \textbf{26}
  (1970), 61--81.

\bibitem{Dec_Dzi_EEFTWFOG}
K.~Deckelnick and G.~Dziuk, \emph{Error estimates for the {W}illmore flow of
  graphs}, Interfaces Free Bound. \textbf{8} (2006), 21--46.

\bibitem{Dro_Rum_ALSFFWF}
M.~Droske and M.~Rumpf, \emph{A level set formulation for {W}illmore flow},
  Interfaces Free Bound. \textbf{6} (2004), no.~3, 361--378.

\bibitem{Du-Liu-Ryham-Wang-2005}
Q.~Du, C.~Liu, R.~Ryham, and X.~Wang, \emph{A phase field formulation of the
  {W}illmore problem}, Nonlinearity (2005), no.~18, 1249--1267.

\bibitem{Dzi_Kuw_Sch_EOECIRNEAC}
G.~Dziuk, E.~Kuwert, and R.~Sch\"atzle, \emph{Evolution of elastic curves in
  $\mathbbm{R}^n$: {E}xistence and computation}, SIAM J. Math. Anal.
  \textbf{41} (2003), no.~6, 2161--2179.

\bibitem{Oberhuber-2006}
T.~Oberhuber, \emph{Finite difference scheme for the {W}illmore flow of
  graphs}, Kybernetika \textbf{43} (2007), 855--867.

\bibitem{Oberhuber-2009}
\bysame, \emph{Complementary finite volume scheme for the anisotropic surface
  diffusion flow}, Proceedings of Algoritmy 2009 (A.~Handlovi\v{c}ov\'{a},
  P.~Frolkovi\v{c}, K.~Mikula, and D.~\v{S}ev\v{c}ovi\v{c}, eds.), 2009,
  pp.~153--164.

\bibitem{OberhuberSuzukiZabka-2011}
T.~Oberhuber, A.~Suzuki, and V.~{\v Z}abka, \emph{The cuda implentation of the
  method of lines for the curvature dependent flows}, Kybernetika \textbf{47}
  (2011), 251--272.

\bibitem{Willmore-2002}
T.~J. Willmore, \emph{Riemannian geometry}, Oxford University Press, 2002.

\bibitem{XuShu-2009}
Y.~Xu and W.~Shu, C., \emph{Local discontinuous galerkin method for surface
  diffusion and willmore flow of graphs}, Journal of Scientific Computing
  (2009), 375--390.

\end{thebibliography}

\providecommand{\bysame}{\leavevmode\hbox to3em{\hrulefill}\thinspace}
\providecommand{\MR}{\relax\ifhmode\unskip\space\fi MR }
\providecommand{\MRhref}[2]{%
  \href{http://www.ams.org/mathscinet-getitem?mr=#1}{#2}
}
\providecommand{\href}[2]{#2}

\end{document}